\begin{document}

\title[Approximation of continuous interval-valued functions]{Constructive approximation of continuous interval-valued functions}


\author{Juan J. Font}
\address{Institut Universitari
de Matem\`{a}tiques i Aplicacions de Castell\'{o} (IMAC),
Universitat Jaume I, Avda. Sos Baynat s/n, 12071, Castell\'{o}
(Spain).}
\thanks{Research supported by grant PID2019-106529GB-I00 and grant
UJI-B2019-08}
\email{font@uji.es}

\author{Sergio Macario}

\address{Institut Universitari
de Matem\`{a}tiques i Aplicacions de Castell\'{o} (IMAC),
Universitat Jaume I, Avda. Sos Baynat s/n, 12071, Castell\'{o}
(Spain).}
\email{macario@uji.es}

\subjclass[2010]{41A30, 65G40.}

\keywords{Stone-Weierstrass theorem, interval-valued continuous functions, Jackson-type theorem, interval neural networks}

\date{\today}

%


%



\begin{abstract}

In this paper we present a Stone-Weierstrass type result in the context of continuous
interval-valued functions
defined on a compact Hausdorff space. Namely, we provide a constructive proof of the approximation.

\end{abstract}

\maketitle



\newtheorem{theorem}{Theorem}[section]
\newtheorem{lemma}[theorem]{Lemma}
\newtheorem{proposition}[theorem]{Proposition}
\newtheorem{corollary}[theorem]{Corollary}
\newtheorem{question}[theorem]{Question}

\theoremstyle{definition}
\newtheorem{definition}[theorem]{Definition}
\newtheorem{algorithm}[theorem]{Algorithm}
\newtheorem{conclusion}[theorem]{Conclusion}
\newtheorem{problem}[theorem]{Problem}

\theoremstyle{remark}
\newtheorem{remark}[theorem]{Remark}
\numberwithin{equation}{section}

\section{Introduction}

When modelling a complex process, some uncertainty in the input and
output variables is expected. In such situations, input and output data
should mix  numbers and intervals. So, often
in these modelling processes, it is needed to deal with functions giving an
interval as an output rather than real numbers.
Interval Analysis is a relatively new area of mathematics which studies how to handle such interval uncertainty which appears in a lot of computer-mathematical models of certain real-world phenomena; in particular, in control theory, linear programming, optimization problems, etc.
The first principles of interval arithmetic were set independently, and almost simultaneously, in the fifties by Paul S. Dwyer (\cite{dwyer}) and Ramon E. Moore (\cite{moore1}, \cite{moore2}) in the Unites States, Mieczyslaw Warmus (\cite{warmus}) in Poland, and Teruo Sunaga (\cite{sunaga}) in Japan, although Interval Analysis is often said to have begun with Moore's book
\cite{moore:66}.


 Interval-valued functions, that is, functions defined on a topological space taking values in the space of closed intervals, should play a central role in Interval Analysis, just like real-valued functions do in the Classical Analysis.
However some difficulties arise when dealing with these interval-valued functions, mainly because the space they form is not a linear space; indeed it is not a group with respect to addition.

Stimulated by the interaction between Interval Analysis and Optimization Theory, interval-valued functions have received considerable attention recently motivated mainly by the necessity of formulating a formal framework for a differential calculus in this context. Namely, an appropriate metric space and a well-behaved subtraction are needed for such development. Thus, for the theoretical framework of calculus of interval-valued functions and among others, Hukuhara (\cite{huku:67}) introduced a concept of difference of two intervals (H-difference) which was used to define the $H$-derivative of an interval-valued function. This concept, however, turned out to be a very restrictive concept. So, in 2008, Stefanini proposed an improved version of the $H$-difference (the $gH$-difference) which seems to be a very useful tool for dealing with interval-valued functions.

The literature on other aspects of the theory of interval-valued functions is reduced compared to the plethora of results dealing with the differential calculus mentioned in the previous paragraph. One of such aspects is the study of the approximation of continuous interval-valued functions.

 In this paper, by combining certain techniques from \cite{FSS:17}, \cite{Je}) and \cite{prola}, we provide some sufficient conditions on a subset of  the space of continuous interval-valued  functions in order that it be dense, which is to say, a Stone-Weierstrass type result for interval-valued continuous functions which doesn't seem to have made its way into the literature yet.
The proofs are constructive and use only straightforward concepts. 

In this context, we also provide a Jackson type approximation result involving the modulus of continuity of interval-valued functions based on the $gH$-difference mentioned above.

Finally, based on the results of the previous sections and taking advantage of an striking result by Guliyev and Ismailov (\cite{GI}), we show how an interval-valued continuous function can be approximated using interval neural networks.

\section{Preliminaries}


Following \cite{DK2}, let $\mathcal{K_{C}}$ denote the set of all finite closed intervals of the real line $\mathbb{R}$.
$$\mathcal{K_{C}}=\{[a,b]\ : a,b\in\mathbb{R},\ a\leq b\}.$$
where $[a,a]$ denotes the singleton $\{a\}$.

In $\mathcal{K_{C}}$ we shall consider two operations, (Minkowski) addition and scalar multiplication, defined by
$$I+J=\{a+b\ :\ a\in I,\ b\in J\}$$
and, for every $\lambda\in\mathbb{R}$,
$$\lambda I=\{\lambda a\ : a\in I\}.$$

 Hereafter $0$ will denote the singleton $[0,0]=\{0\}$. and it is the neutral element for the Minkowski addition. Unless
 $J$ is a singleton , $J+(-J)\neq0$. So,  in general, there is no inverse for the sum operation and
the structure of $\mathcal{K_{C}}$ is that of a cone rather than a linear space.

We consider now the metric space $(\mathcal{K_{C}},d_H)$, where $d_H$ is the Pompeiu-Hausdorff metric defined as

$$d_H(I,J)=\max\left\{\max_{a\in I}d(a,J),\max_{b\in J}d(I,b)\right\}$$
where
$$d(a,J)=\min_{b\in J} |a-b|\quad \text{and} \quad d(I,b)=\min_{a\in I} |a-b|.$$

In particular, if $I=[a,b]$ and $J=[c,d]$, then
$$d_H(I,J)=\max\{|a-c|,|b-d|\}.$$

It is well known that $(\mathcal{K_{C}},d_H)$ is a complete metric space (see \cite{aubin-franskowska:1990}).

\begin{proposition} \label{properties} The metric $d_H$ satisfies the following properties:
\begin{enumerate}
\item $d_H ( \sum_{i=1}^{m}A_{i}, \sum_{i=1}^{m}B_{i}) \le \sum_{i=1}^{m}d_H (A_i, B_i)$ where $A_i,B_i\in \mathcal{K_{C}}$ for $i=1,...,m$.
\item  $d_H( \alpha A, \alpha B) = \alpha d_H (A, B)$ where $A,B\in \mathcal{K_{C}}$ and $\alpha>0$.
\item $d_H (\alpha A,\beta A ) = \mid \alpha - \beta  \mid d_H (A,0),$ where $A \in \mathcal{K_{C}}$,  $ \alpha,\, \beta \ge 0$.
\item $d_H (\alpha A,\beta B ) \le \mid \alpha - \beta  \mid d_H (A,0) + \beta d_H ( A,B),$ where $A,B \in \mathcal{K_{C}}$, $ \alpha,\; \beta \ge 0$.
\item $d_H(A+C,B+C)=d_H(A, B)$, where $A, B, C\in \mathcal{K_C}$.
\end{enumerate}
\end{proposition}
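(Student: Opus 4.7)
The plan is to exploit the explicit endpoint formula $d_H([a^-,a^+],[b^-,b^+])=\max\{|a^- - b^-|,|a^+ - b^+|\}$ stated just before the proposition, together with the fact that Minkowski addition and multiplication by a nonnegative scalar act on endpoints componentwise: $A+B=[a^- + b^-,\,a^+ + b^+]$ and $\alpha A=[\alpha a^-,\,\alpha a^+]$ for $\alpha\ge 0$. With these reductions every item collapses to an elementary inequality about absolute values of real numbers.

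Items (2), (3), and (5) are then direct verifications. For (5), adding a common interval $C$ cancels its endpoints in both coordinates of the max. For (2), a positive scalar factors out of each absolute value. For (3), the two intervals $\alpha A$ and $\beta A$ share the endpoints of $A$, so each coordinate of the difference equals $|\alpha-\beta|\,|a^{\pm}|$; since $d_H(A,0)=\max\{|a^-|,|a^+|\}$, taking the maximum yields the claimed equality.

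For (1), the componentwise description of $\sum_{i=1}^{m} A_i$ and $\sum_{i=1}^{m} B_i$, together with the ordinary triangle inequality for $|\cdot|$ in $\mathbb{R}$, gives
\[
\Bigl|\sum_{i=1}^{m} a_i^{-}-\sum_{i=1}^{m} b_i^{-}\Bigr|\le\sum_{i=1}^{m} |a_i^{-}-b_i^{-}|\le\sum_{i=1}^{m} d_H(A_i,B_i),
\]
and symmetrically for the upper endpoints. Taking the maximum produces the desired subadditivity. Property (4) then needs no new input: applying the metric triangle inequality at the intermediate point $\beta A$ and using (3) on $d_H(\alpha A,\beta A)$ together with (2) on $d_H(\beta A,\beta B)$ yields $d_H(\alpha A,\beta B)\le |\alpha-\beta|\,d_H(A,0)+\beta\, d_H(A,B)$.

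There is no genuine obstacle here; this is a preliminary proposition collecting standard facts about the Pompeiu-Hausdorff metric on intervals. The only minor point to watch is that $\alpha A=[\alpha a^-,\alpha a^+]$ has its endpoints in the correct order precisely because $\alpha\ge 0$, which is the standing hypothesis in every item where scalar multiplication appears, so no case analysis for sign flips is needed.
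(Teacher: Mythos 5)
Your proof is correct, but it takes a more self-contained route than the paper for items (1)--(3). The paper simply cites Diamond--Kloeden for (1) and (2), and obtains (3) by the algebraic decomposition $\alpha A=\beta A+(\alpha-\beta)A$, $\beta A=\beta A+(\alpha-\beta)\cdot 0$, followed by an application of (1); note that this argument only yields the inequality $d_H(\alpha A,\beta A)\le|\alpha-\beta|\,d_H(A,0)$, even though the statement asserts equality. Your direct endpoint computation, $d_H(\alpha A,\beta A)=\max\{|\alpha-\beta|\,|a^-|,|\alpha-\beta|\,|a^+|\}=|\alpha-\beta|\max\{|a^-|,|a^+|\}=|\alpha-\beta|\,d_H(A,0)$, actually establishes the full equality, so on this point your argument is strictly stronger than what is written in the paper. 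For (4) your derivation (triangle inequality through $\beta A$, then (3) and (2)) and for (5) your endpoint cancellation coincide with the paper's. The trade-off is that the paper's route for (3) is metric-axiomatic and would survive in settings where no endpoint formula is available (e.g.\ higher-dimensional convex bodies), whereas your route leans on the one-dimensional interval structure but gives sharper conclusions with no external references.
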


\begin{proof} The proofs of (1) and (2) can be found, for example, in \cite{DK2}.
In order to prove (3),  let us assume that $\beta < \alpha$ and rewrite $\alpha A$ and  $\beta A$ as
$\alpha A =  (\beta + (\alpha- \beta)) A=\beta A + (\alpha- \beta) A $ and $ \beta A= \beta A + (\alpha- \beta) 0$.
By (1), we know that
$$d_H(\alpha A, \beta A) \le d_H (\beta A, \beta A) + d_H ((\alpha - \beta) A ,(\alpha - \beta) 0) =
\mid  \alpha - \beta \mid  d_H (A, 0).$$

\bigskip
Consequently, by (2) and (3),
$$d_H (\alpha A,\beta B) \le d_H (\alpha A, \beta A) + d_H ( \beta A, \beta B) \le \mid \alpha - \beta  \mid d_H (A,0) + \beta d_H ( A,B).$$
For (5), take $A=[a^-,a^+]$,  $B=[b^-,b^+]$ and $C=[c^-,c^+]$. Then
\[
\begin{split}
d_H(A+C,B+C)&=\max\{|a^-+c^- - (b^- + c^-)|, \ |a^++c^+-(b^++c^+|\} \\
&=\max\{|a^- - b^- |, \ |a^+ - b^+|\}=d_H(A,B)
\end{split}
\]
\end{proof}

\bigskip

In the sequel, let $K$ be a compact Hausdorff space and let $C(K,\mathcal{K_{C}})$ denote the space of continuous functions from $K$ to the metric space $(\mathcal{K_{C}}, d_H)$, that is, the space of continuous interval-valued functions defined on $K$. We shall consider $C(K,\mathcal{K_{C}})$ endowed with the supremum metric:
$$D_{\infty}(f,g) =  \sup_{ t \in K} d_H(f(t),g(t)),$$
which induces, as usual, the uniform convergence topology on $C(K,\mathcal{K_{C}})$.

\begin{proposition} Let $f \in C (K, \mathcal{K_{C}})$ and $\varphi \in C (K, \mathbb{R}^+)$. Then the mapping from $K$ to $\mathcal{K_{C}}$ defined by $k \mapsto \varphi (k) f(k)$, belongs to
$C(K, \mathcal{K_{C}})$ as well.
\end{proposition}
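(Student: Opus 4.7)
The plan is to verify continuity of the map $g(k) := \varphi(k) f(k)$ at an arbitrary point $k_0 \in K$, directly from estimate (4) of Proposition \ref{properties}. Setting $\alpha = \varphi(k)$, $\beta = \varphi(k_0)$, $A = f(k)$, $B = f(k_0)$ (all satisfying the nonnegativity hypothesis since $\varphi$ takes values in $\mathbb{R}^+$), that inequality yields
$$d_H\bigl(\varphi(k)f(k),\,\varphi(k_0)f(k_0)\bigr) \;\le\; |\varphi(k)-\varphi(k_0)|\, d_H(f(k),0) \;+\; \varphi(k_0)\, d_H(f(k),f(k_0)).$$

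The second term is harmless: $\varphi(k_0)$ is a fixed constant and $d_H(f(k),f(k_0)) \to 0$ as $k \to k_0$ by continuity of $f$. For the first term, one needs $d_H(f(k),0)$ to stay bounded, and this is where compactness of $K$ enters. The function $k \mapsto d_H(f(k),0)$ is continuous on $K$ (by the triangle inequality for $d_H$, applied to the fixed point $0 \in \mathcal{K_{C}}$), so by compactness it attains a finite maximum $M \ge 0$ on $K$. Thus
$$d_H\bigl(\varphi(k)f(k),\,\varphi(k_0)f(k_0)\bigr) \;\le\; M\,|\varphi(k)-\varphi(k_0)| \;+\; \varphi(k_0)\, d_H(f(k),f(k_0)).$$

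Given $\varepsilon > 0$, I would now choose a neighborhood of $k_0$ on which $|\varphi(k)-\varphi(k_0)| < \varepsilon/(2(M+1))$ (by continuity of $\varphi$) and simultaneously $d_H(f(k),f(k_0)) < \varepsilon/(2(\varphi(k_0)+1))$ (by continuity of $f$); their intersection is a neighborhood of $k_0$ on which $d_H(g(k),g(k_0)) < \varepsilon$, establishing continuity at $k_0$. Since $k_0 \in K$ was arbitrary, $g \in C(K,\mathcal{K_{C}})$. No serious obstacle is anticipated: the whole argument is an application of estimate (4) combined with the standard trick of absorbing $d_H(f(\cdot),0)$ into a uniform bound via compactness.
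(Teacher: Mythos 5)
Your argument is correct and is essentially the paper's own proof: both apply estimate (4) of Proposition \ref{properties} with $\alpha=\varphi(k)$, $\beta=\varphi(k_0)$, $A=f(k)$, $B=f(k_0)$, and then use compactness of $K$ to bound $d_H(f(\cdot),0)$ uniformly. You merely spell out the final $\varepsilon$--$\delta$ bookkeeping that the paper leaves implicit.
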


\begin{proof}
Since $\varphi(k)>0$ for all $k\in K$, then  the pointwise product $\varphi (k) f(k)$ is well defined.  Let us see that the mapping  is continuous. Choose $k_1,k_2 \in K$. By Proposition \ref{properties} (4), we know that
\[
\begin{split}
d_H( \varphi (k_1) f(k_1), \varphi (k_2) f(k_2) ) &\le | \varphi (k_1) - \varphi (k_2) |  d_H ( f(k_1),0)\\
 &+ \varphi (k_2) d_H ( f(k_1), f(k_2)).
 \end{split}
 \]
The proof follows since both $\varphi$ and $f$ are continuous and bearing in mind that every continuous function from a compact space to a metric space is bounded.
\end{proof}

\section{A version of the Stone-Weierstrass theorem in Interval Valued Analysis.}
The Stone-Weierstrass Theorem is a main tool in the General Approximation Theory. Our version states conditions for a subset $H$ of continuous interval-valued  functions from a compact space $K$  being dense in the whole space of continuous interval-valued functions, endowed with the supremum metric. That is, to find, for every continuous interval-valued function $f$, another one close enough to $f$ and belonging to this smaller subset of continuous interval-valued functions.
Following \cite{FSS:17}, we introduce a useful tool to get  our main theorem (Theorem \ref{main}).

\begin{definition} \label{multi} Let $H$ be a nonempty subset of $C(K,\mathcal{K_{C}})$. We define $$Conv(H)=\{ \varphi \in C(K, [0,1]): \varphi f + (1 - \varphi) g \in H \hspace{0,2cm} for \hspace{0,2cm} all \hspace{0,2cm} f,g \in H \}.$$
\end{definition}

This set, $Conv(H)$, have some nice properties that will be helpful in order to prove our results.

\begin{proposition} \label{multiplier} Let $H$ be a nonempty subset of $C(K,\mathcal{K_{C}})$. Then we have:
\begin{enumerate}
\item $\phi \in Conv(H)$ implies that $1 - \phi \in Conv(H)$.
\item If $\phi, \varphi \in Conv(H)$, then $\phi \cdot \varphi \in Conv(H)$.
\item If $\phi$ belongs to the uniform closure of $Conv(H)$, then so does $1 - \phi$.
\item If $\phi, \varphi$ belong to the uniform closure of $Conv(H)$, then so does $\phi \cdot \varphi$.
\end{enumerate}
\end{proposition}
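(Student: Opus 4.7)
The strategy is to establish the two algebraic statements (1) and (2) by direct manipulation inside $H$, and then to deduce the closure statements (3) and (4) by a uniform convergence argument applied to the algebraic ones.

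For (1), given $\phi \in Conv(H)$ and arbitrary $f,g \in H$, I want $(1-\phi)f + \phi g \in H$. Since Minkowski addition is commutative, this equals $\phi g + (1-\phi) f$, and now $g, f \in H$, so membership follows directly from the definition of $Conv(H)$ applied with the roles of $f$ and $g$ interchanged. Hence $1-\phi \in Conv(H)$.

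For (2), the key identity I plan to verify is
\[
\phi\varphi f + (1 - \phi\varphi)g \;=\; \phi\bigl[\varphi f + (1-\varphi)g\bigr] + (1-\phi)g.
\]
This is where the main (mild) obstacle lies: since $\mathcal{K_C}$ is only a cone, I cannot invoke the linear-space distributive laws freely. However, for every interval $I$ and every $\alpha,\beta \geq 0$ one has $\alpha(I+J) = \alpha I + \alpha J$ and $(\alpha+\beta)I = \alpha I + \beta I$. Applying the first with $\alpha=\phi(k)\geq 0$ to expand the bracket and the second with nonnegative scalars $\phi(1-\varphi)$ and $(1-\phi)$ to recombine
\[
\phi(1-\varphi)g + (1-\phi)g \;=\; \bigl[\phi(1-\varphi) + (1-\phi)\bigr] g \;=\; (1-\phi\varphi) g
\]
yields the identity. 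Once this is established, set $h := \varphi f + (1-\varphi)g$; then $h \in H$ because $\varphi \in Conv(H)$, and $\phi h + (1-\phi)g \in H$ because $\phi \in Conv(H)$, so $\phi\varphi \in Conv(H)$.

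For (3) and (4), I will move from $Conv(H)$ to its uniform closure $\overline{Conv(H)}$ via sequences. If $\phi_n \to \phi$ uniformly with $\phi_n \in Conv(H)$, then $1-\phi_n \in Conv(H)$ by (1), and $1-\phi_n \to 1-\phi$ uniformly, proving (3). For (4), pick $\phi_n,\varphi_n \in Conv(H)$ with $\phi_n \to \phi$ and $\varphi_n \to \varphi$ uniformly. By (2), $\phi_n\varphi_n \in Conv(H)$, and using that all functions involved are bounded by $1$,
\[
|\phi_n\varphi_n - \phi\varphi| \;\leq\; |\phi_n|\,|\varphi_n - \varphi| + |\varphi|\,|\phi_n - \phi| \;\leq\; \|\varphi_n-\varphi\|_\infty + \|\phi_n-\phi\|_\infty,
\]
so $\phi_n\varphi_n \to \phi\varphi$ uniformly and $\phi\varphi \in \overline{Conv(H)}$. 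The only real point requiring care is the algebraic identity in (2), which relies on the cone distributivity valid for nonnegative scalars; everything else is a standard $\varepsilon$-argument.
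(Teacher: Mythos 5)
Your proof is correct and follows essentially the same route as the paper: part (1) by symmetry of the definition, part (2) via the identity $(\phi\varphi)f+(1-\phi\varphi)g=\phi[\varphi f+(1-\varphi)g]+(1-\phi)g$, and parts (3)--(4) by uniform limits of sequences in $Conv(H)$. Your only addition is to justify explicitly the cone distributivity $(\alpha+\beta)I=\alpha I+\beta I$ for nonnegative scalars, a point the paper leaves implicit; this is a welcome extra precaution but not a different argument.
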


\begin{proof}
 (1) is clear. To see (2), let us assume that $\phi,\varphi \in Conv(H)$.  To get that  $\phi \cdot \varphi \in Conv(H)$, we use the identity
$$1 - \phi \cdot \varphi = (1 - \phi) + \phi (1 - \varphi)$$
which implies, for every pair $f, g \in H$, that
$$(\phi \cdot \varphi)f + (1 - \phi \cdot \varphi)g = \phi [ \varphi f + (1 - \varphi)g ] + (1 - \phi) g\in H.$$

To show (3), let us suppose that there exists a sequence $\{\phi_n \}\subset Conv(H)$ uniformly converging to $\phi\in C(K,[0,1])$. Hence, $\{1 - \phi_n\}$, which is contained in $Conv(H)$ by (1), converges uniformly to $1 - \phi$.

Finally, (4) can be proved in the same way.

\end{proof}

We still need to state another property of  $Conv(H)$ whose proof follows from two technical lemmas that can be found in \cite{Je}.


\begin{lemma}\cite[Lemma 2]{Je}\label{patata} Let $0\leq a < b \leq 1$ and $0 < \delta < \frac{1}{2}$. There exists a polynomial $p(x)= (1 - x^m)^n$ such that
\begin{enumerate}
\item $p(x)> 1 - \delta$ for all $0 \le x \le a$,
\item $p(x)< \delta$ for all $b \le x \le 1$.
\end{enumerate}
\end{lemma}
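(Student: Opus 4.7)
\medskip

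\noindent\textbf{Proof plan for Lemma \ref{patata}.}
The plan is to exploit the fact that $p(x) = (1-x^{m})^{n}$ is strictly decreasing on $[0,1]$ with $p(0)=1$ and $p(1)=0$, so the two required inequalities reduce, by monotonicity, to
\[
(1 - a^{m})^{n} > 1 - \delta \quad \text{and} \quad (1 - b^{m})^{n} < \delta.
\]
I would first dispose of the trivial boundary cases $a = 0$ (where the first inequality is automatic since $p(0) = 1$) and $b = 1$ (where the second is automatic since $p(1) = 0$), so that in the main case we may assume $0 < a < b < 1$.

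Next I would attack each inequality with an elementary one-sided estimate. For the first, Bernoulli's inequality gives
\[
(1 - a^{m})^{n} \;\ge\; 1 - n\, a^{m},
\]
so it suffices to choose $m, n$ with $n\, a^{m} < \delta$. For the second, the standard bound $1 - t \le e^{-t}$ for $t \in [0,1]$ yields
\[
(1 - b^{m})^{n} \;\le\; e^{-n\, b^{m}},
\]
so it suffices to choose $m, n$ with $n\, b^{m} \ge \log(1/\delta)$.

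The two conditions together require
\[
\frac{\log(1/\delta)}{b^{m}} \;\le\; n \;<\; \frac{\delta}{a^{m}},
\]
and such an $n$ exists as soon as $(b/a)^{m} > \log(1/\delta)/\delta$. This is where the key observation enters: since $b > a > 0$, the quantity $(b/a)^{m}$ tends to $+\infty$ with $m$, so any sufficiently large $m$ makes the interval for $n$ nonempty. One then picks, say, $n = \lceil \log(1/\delta) / b^{m} \rceil$ and verifies the two endpoint inequalities, from which the full statement follows by monotonicity of $p$.

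The main (and essentially only) delicate point is this calibration between $m$ and $n$: one must keep $n\,a^{m}$ small while simultaneously making $n\,b^{m}$ large, which is possible only because the ratio $b^{m}/a^{m}$ grows without bound. The rest is routine bookkeeping with Bernoulli's inequality and the exponential bound.
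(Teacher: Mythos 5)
The paper does not prove this lemma at all: it is quoted verbatim from Jewett (\cite[Lemma 2]{Je}), so there is no in-paper argument to compare against. Your proof is a correct, self-contained version of the standard calibration argument (and essentially the classical one): reduce to the endpoint inequalities $(1-a^m)^n>1-\delta$ and $(1-b^m)^n<\delta$ by monotonicity, bound the first from below by Bernoulli and the second from above by $1-t\le e^{-t}$, and then choose $m$ large enough that an integer $n$ fits between $\log(1/\delta)/b^m$ and $\delta/a^m$. The only imprecision is the sentence claiming such an $n$ exists ``as soon as $(b/a)^m>\log(1/\delta)/\delta$'': that condition only makes the real interval nonempty, not guarantees it contains an integer. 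Your subsequent choice $n=\lceil \log(1/\delta)/b^m\rceil$ does work for all sufficiently large $m$, because $\delta/a^m-\log(1/\delta)/b^m\to\infty$ (the lower endpoint is bounded below by $\log 2$ and the ratio of the endpoints grows like $(b/a)^m$), but you should state that strengthened condition rather than mere nonemptiness. With that one sentence repaired, the proof is complete; the boundary cases $a=0$ and $b=1$ are handled correctly.
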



\begin{lemma}\cite[Theorem 1]{Je}\label{patata2} For every $0<\varepsilon<\frac{1}{32}$, there exists a polynomial in two variables $q(x,y)= (1 -p_1(x))p_2(x)(1-p_3(y))p_4(y)$, with $p_k(z)=(1-z^{m_k})^{n_k}$, $m_k,\; n_k \in\mathbb{N}$,
$k=1,2,3,4$; such that
$$|x\wedge y-q(x,y)|<\varepsilon, \qquad \text{for every $x,y\in [0,1]$ }$$
where $x\wedge y$ stands for $min\{x,y\}$.
\end{lemma}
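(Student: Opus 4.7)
My approach is to build the four univariate polynomials $p_1, p_2, p_3, p_4$ by four separate applications of Lemma \ref{patata} with thresholds chosen carefully in terms of $\varepsilon$, and then verify the uniform estimate on $[0,1]^2$ by a direct case analysis on the location of $(x,y)$. The guiding heuristic is that Lemma \ref{patata} positions each $p_k$ as a near-step polynomial descending from $1$ to $0$ over a short transition; combining two such steps multiplicatively, $(1-p_1(x))p_2(x)$ behaves like an approximate bump on a prescribed subinterval of $[0,1]$, and analogously for $(1-p_3(y))p_4(y)$. Tuning where these bumps sit, in terms of $\varepsilon$, is what will let the four-factor product $q$ track $x\wedge y$.

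Concretely, given $\varepsilon\in(0,1/32)$, I would first fix an auxiliary parameter $\delta=\delta(\varepsilon)$ small (something of order $\varepsilon/16$ should suffice once the case analysis is carried out) and pick thresholds $0<a_k<b_k<1$ for $k=1,2,3,4$ in a nested fashion ($a_1<b_1<a_2<b_2$ for the $x$-factors, analogously for the $y$-factors) so that the middle band of $(1-p_1(x))p_2(x)$ is aligned with the portion of $[0,1]$ where $\min(x,y)$ is closest to $1$. Then I would apply Lemma \ref{patata} four times to obtain polynomials $p_k(z)=(1-z^{m_k})^{n_k}$ with the required step behaviour $p_k>1-\delta$ on $[0,a_k]$ and $p_k<\delta$ on $[b_k,1]$.

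With the parameters in hand, I would partition $[0,1]^2$ into finitely many cells determined by which of $p_1(x)$, $p_2(x)$, $p_3(y)$, $p_4(y)$ lies in the high ($>1-\delta$) or low ($<\delta$) regime, plus the narrow transition strips coming from Lemma \ref{patata}. On each cell outside the strips, $q(x,y)$ is within $O(\delta)$ of the corresponding limiting product of zeros and ones, while $x\wedge y$ can be shown to lie within $\varepsilon$ of that same limiting value thanks to the nesting of the thresholds. On the transition strips, the bound follows from the smallness of the strip widths combined with the uniform continuity of $\min$.

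The main obstacle I expect is this last step: the product structure of $q$ must be reconciled with the continuous variation of $x\wedge y$ across the diagonal $x=y$, and the four factors each introduce a $\delta$-error whose multiplicative accumulation has to be controlled via triangle-type inequalities in the spirit of Proposition \ref{properties}. The ceiling $\varepsilon<1/32$ should emerge naturally from this error budget once the four multiplicative factors and the transition-strip contributions are both absorbed into the total error.
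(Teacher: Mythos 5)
The paper offers no proof of Lemma~\ref{patata2}; it is imported verbatim as ``Theorem 1'' of \cite{Je}, so there is no internal argument to compare yours against. Judged on its own terms, your proposal breaks at the step where you assert that, on each cell of your partition, $x\wedge y$ ``lies within $\varepsilon$ of that same limiting value'', the limiting value being a product of $0$'s and $1$'s. The limiting values of $q$ on your cells are indeed in $\{0,1\}$, but $x\wedge y$ takes every value in $[0,1]$: on whatever cell contains $(1/2,1/2)$ the target is $1/2$, which is not within $\varepsilon<1/32$ of either $0$ or $1$. Your own (correct) description of $(1-p_1(x))p_2(x)$ as an approximate bump exposes the problem: $q$ is then essentially the indicator function of a rectangle, and no such function is uniformly close to $\min(x,y)$ on the square.

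No tuning of thresholds can repair this, because the statement as transcribed is false. First, $p_k(1)=(1-1^{m_k})^{n_k}=0$ for every $k$, so $q(1,1)=(1-p_1(1))\,p_2(1)\,(1-p_3(1))\,p_4(1)=0$ while $1\wedge 1=1$. Second, the obstruction is not merely a boundary artifact: $q(x,y)=F(x)G(y)$ is a separable product with $F,G:[0,1]\to[0,1]$, and if $|F(x)G(y)-x\wedge y|<\varepsilon$ held everywhere, then $F(1),G(1)>1-\varepsilon$, hence $F(1/2)<(1/2+\varepsilon)/(1-\varepsilon)$ and likewise for $G(1/2)$, giving $F(1/2)G(1/2)<(1/2+\varepsilon)^2/(1-\varepsilon)^2<1/3<1/2-\varepsilon$ for $\varepsilon<1/32$, contradicting $F(1/2)G(1/2)>1/2-\varepsilon$. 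So the lemma as quoted must be a misstatement of Jewett's actual Theorem~1 (whose polynomial of this form can only play the role of a two-variable plateau/step function, not a uniform approximant of $\min$ on all of $[0,1]^2$), and as written it also cannot support the use made of it in Proposition~\ref{patata3}. The right move is not to push the case analysis further but to return to \cite{Je}, restate the theorem correctly, and then adapt the surrounding argument.
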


\begin{proposition} \label{patata3} Let $ H \subseteq C(K,\mathcal{K_{C}})$. Given two elements of $Conv(H)$, its maximum lies in the uniform closure of $Conv(H)$.
\end{proposition}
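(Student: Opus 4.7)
The plan is to reduce the problem about $\phi \vee \varphi := \max(\phi,\varphi)$ to one about $\phi \wedge \varphi := \min(\phi,\varphi)$ via the identity
\[
\phi \vee \varphi = 1 - (1-\phi)\wedge(1-\varphi),
\]
and then attack the $\wedge$ side with the two-variable polynomial approximation supplied by Lemma \ref{patata2}. Both reductions are compatible with the stability properties collected in Proposition \ref{multiplier}: if $\phi,\varphi\in Conv(H)$ then $1-\phi,1-\varphi\in Conv(H)$ by (1), and the uniform closure is closed under $z\mapsto 1-z$ and under products by (3) and (4).

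The main step is therefore to show that $\phi\wedge\varphi$ lies in the uniform closure of $Conv(H)$ whenever $\phi,\varphi\in Conv(H)$. Fix $\varepsilon\in(0,1/32)$ and let $q(x,y)=(1-p_1(x))p_2(x)(1-p_3(y))p_4(y)$ with $p_k(z)=(1-z^{m_k})^{n_k}$ be the polynomial given by Lemma \ref{patata2}. I will check that the composite $q(\phi,\varphi)\in C(K,[0,1])$ actually belongs to $Conv(H)$ itself. Starting from $\phi\in Conv(H)$, Proposition \ref{multiplier}(2) gives $\phi^{m_k}\in Conv(H)$, then (1) gives $1-\phi^{m_k}\in Conv(H)$, and another application of (2) gives $p_k(\phi)=(1-\phi^{m_k})^{n_k}\in Conv(H)$. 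A further use of (1) and (2) shows $(1-p_1(\phi))p_2(\phi)\in Conv(H)$, and symmetrically $(1-p_3(\varphi))p_4(\varphi)\in Conv(H)$. One last application of (2) yields $q(\phi,\varphi)\in Conv(H)$. Since $\phi(k),\varphi(k)\in[0,1]$ for every $k\in K$, Lemma \ref{patata2} gives
\[
\sup_{k\in K}|\phi(k)\wedge\varphi(k)-q(\phi(k),\varphi(k))|<\varepsilon,
\]
so $\phi\wedge\varphi$ is uniformly approximated by elements of $Conv(H)$.

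Applying this to $1-\phi,1-\varphi\in Conv(H)$ places $(1-\phi)\wedge(1-\varphi)$ in the uniform closure of $Conv(H)$, and Proposition \ref{multiplier}(3) then puts $1-(1-\phi)\wedge(1-\varphi)=\phi\vee\varphi$ in the same closure, finishing the argument.

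The only delicate point is the verification that each intermediate expression built from $\phi$ and $\varphi$ by the operations $z\mapsto 1-z$, $z\mapsto z^m$, $z\mapsto(1-z^m)^n$ and pointwise products actually stays inside $Conv(H)$, so that the approximation happens from within $Conv(H)$; this is handled by the explicit form of $q$ in Lemma \ref{patata2}, which is precisely a product of terms of the required shape, and by parts (1) and (2) of Proposition \ref{multiplier}. Beyond this bookkeeping, the proof is a direct composition of the lemma with the algebraic closure properties of $Conv(H)$.
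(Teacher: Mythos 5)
Your proposal is correct and follows essentially the same route as the paper: reduce $\phi\vee\varphi$ to $(1-\phi)\wedge(1-\varphi)$ via $\phi\vee\varphi=1-((1-\phi)\wedge(1-\varphi))$, approximate the minimum uniformly by the polynomial $q$ of Lemma~\ref{patata2}, and use parts (1) and (2) of Proposition~\ref{multiplier} to see that the composite $q(1-\phi,1-\varphi)$ lies in $Conv(H)$. The only difference is cosmetic: you spell out the bookkeeping showing each intermediate expression stays in $Conv(H)$, which the paper leaves implicit.
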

\begin{proof}
Let $\phi$ and $\psi$ be two elements in $Conv(H)$. Let $\phi\vee \psi$ stand for the maximum of $\phi$ and $\psi$. By property (3) of Proposition~\ref{multiplier}, since
$$\phi\vee \psi =1-((1-\phi)\wedge(1-\psi)),$$
 it suffices to prove that $(1-\phi)\wedge(1-\psi)$ belongs to the uniform closure of $Conv(H)$.

Take $0<\varepsilon<1/32$ and the corresponding polynomial $q(x,y)$ given by Lemma~\ref{patata2}. Then,
\[
\left| (1-\phi(s)\wedge(1-\psi(s)-q(1-\phi(s),1-\psi(s))\right|<\varepsilon
\]
for all $s\in K$. By (1) and (2) of Proposition~\ref{multiplier} and the given form of $q(x,y)$ we can claim that $\varphi:=q(1-\phi,1-\psi)$ belongs to $Conv(H)$. So we have just shown that there exists $\varphi\in Conv(H)$ with
\[
\left| (1-\phi(s))\wedge(1-\psi(s))-\varphi(s)\right|<\varepsilon
\]
for all $s\in K$ and we get the conclusion.

\end{proof}

It is convenient to remark that not only the maximum  of two elements of $Conv(H)$ belongs to the uniform closure of $Conv(H)$, but also its minimum.



\begin{definition}
Let $H$ be a subset $C(K, [0, 1])$.  It is said that $H$ separates the points of $K$ if given $s, t \in K$,
there exists $\phi \in H$ such that $\phi(s) \not= \phi(t)$.
\end{definition}

The next lemma provides the selection of the elements $\phi\in Conv(H)$ needed for the construction of the approximation function $g$ in our main result. The proof is similar to the one from \cite{FSS:17} but we include it here for the sake of completeness.

\begin{lemma}\label{pat3} Let $ H$ be a subset of $C(K,\mathcal{K_{C}})$ such that $Conv(H)$ separates the points of $K$. Given $x_0 \in K$ and an open neighborhood $V$ of $x_0$, there exists a neighborhood $U$ of $x_0$, with $U\subseteq V$, such that, for all $0 < \delta < \frac{1}{2}$, there is $\phi \in Conv(H)$ satisfying
\begin{enumerate}
\item $\phi (t) > 1 - \delta,$ for all $t \in U$;
\item $\phi (t) < \delta,$ for all $t \notin V$.
\end{enumerate}

\end{lemma}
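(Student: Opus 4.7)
The plan is to mimic the classical Stone--Weierstrass separation argument, exploiting that $Conv(H)$ is closed under $\phi\mapsto 1-\phi$ and under pointwise products by Proposition~\ref{multiplier}(1)--(2), and using Lemma~\ref{patata} to pass from ``$\phi_y(x_0)$ small, $\phi_y(y)$ large'' to ``$p\circ\phi_y$ close to $1$ at $x_0$, close to $0$ near $y$''. The key subtlety, which dictates the order of the steps, is that $U$ must be produced \emph{before} $\delta$ is revealed, so the combinatorial data (points $y_i$, neighborhoods $U_{y_i}$ and $W_{y_i}$, thresholds $a_{y_i}<b_{y_i}$) will be fixed once and for all from $(x_0,V)$, and only the polynomial exponents coming from Lemma~\ref{patata} will depend on $\delta$.

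First I would treat each $y\in K\setminus V$ separately. Separation gives $\phi_y\in Conv(H)$ with $\phi_y(x_0)\neq\phi_y(y)$; after possibly replacing $\phi_y$ by $1-\phi_y$, I may assume $\phi_y(x_0)<\phi_y(y)$. Picking $a_y,b_y$ strictly between $\phi_y(x_0)$ and $\phi_y(y)$ with $a_y<b_y$, continuity yields open neighborhoods $U_y\ni x_0$ and $W_y\ni y$ such that $\phi_y<a_y$ on $U_y$ and $\phi_y>b_y$ on $W_y$; I shrink $U_y$ to ensure $U_y\subseteq V$. Since $K\setminus V$ is closed in the compact space $K$, I extract a finite subcover $W_{y_1},\dots,W_{y_n}$ of $\{W_y\}_{y\in K\setminus V}$ and set $U:=\bigcap_{i=1}^n U_{y_i}$; this $U$ is open, contains $x_0$, is contained in $V$, and depends only on $(x_0,V)$.

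Given $0<\delta<\tfrac{1}{2}$, choose $\eta\in(0,\delta/n)$, and for each $i$ apply Lemma~\ref{patata} with parameters $(a_{y_i},b_{y_i},\eta)$ to obtain a polynomial $p_i(x)=(1-x^{m_i})^{n_i}$ with $p_i>1-\eta$ on $[0,a_{y_i}]$ and $p_i<\eta$ on $[b_{y_i},1]$. Iterated use of Proposition~\ref{multiplier}(1)--(2) places each $\psi_i:=(1-\phi_{y_i}^{m_i})^{n_i}$ in $Conv(H)$, and hence also the product
\[
\phi:=\prod_{i=1}^n\psi_i\in Conv(H).
\]
On $U$, every $\phi_{y_i}(t)\leq a_{y_i}$, so $\psi_i(t)>1-\eta$ and $\phi(t)>(1-\eta)^n\geq 1-n\eta>1-\delta$ by Bernoulli's inequality; for $t\notin V$, some $W_{y_i}$ contains $t$, giving $\psi_i(t)<\eta$, and since the remaining factors are bounded by $1$, $\phi(t)\leq\psi_i(t)<\delta$.

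The main obstacle I anticipate is bookkeeping: making sure the geometric skeleton is assembled from data that ignore $\delta$, so that the $\delta$-dependence is funneled exclusively into the exponents $m_i,n_i$ supplied by Lemma~\ref{patata}. A more naive order in which the finite subcover is chosen after $\delta$ would produce a $U$ depending on $\delta$ and would not prove the statement as formulated.
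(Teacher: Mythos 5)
Your proof is correct, but it takes a genuinely different route from the paper's. The paper orients the separating functions the other way ($\varphi_t(t)<\varphi_t(x_0)$), builds from them functions $\varphi_i$ that are $>\tfrac34$ near the points $t_i$ covering $K\setminus V$ and $<\tfrac14$ at $x_0$, and then takes their pointwise \emph{maximum} $\psi=\varphi_1\vee\cdots\vee\varphi_m$; this forces a detour through Proposition~\ref{patata3} (hence Jewett's two-variable polynomial of Lemma~\ref{patata2}), lands only in the uniform \emph{closure} of $Conv(H)$, and therefore requires composing with one more polynomial from Lemma~\ref{patata} and a final $\delta/2$-approximation step to get back into $Conv(H)$. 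You instead flip the orientation, arrange each $\psi_i$ to be near $1$ on a common neighborhood $U$ of $x_0$ and near $0$ on the covering set $W_{y_i}$, and take the \emph{product} $\prod_i\psi_i$, which stays in $Conv(H)$ exactly by Proposition~\ref{multiplier}(1)--(2); the only cost is taking the accuracy $\eta<\delta/n$ so that the $n$-fold product survives via Bernoulli's inequality. Your version is more economical --- it dispenses entirely with Lemma~\ref{patata2}, Proposition~\ref{patata3}, and the closure-then-approximate step --- while the paper's version reuses the lattice machinery it has already set up. Both arguments correctly fix the cover and the neighborhood $U$ before $\delta$ is revealed, funneling all $\delta$-dependence into the exponents supplied by Lemma~\ref{patata}, which is the essential quantifier point you rightly emphasize.
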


\begin{proof}
Let $W=K\setminus V$. Since $Conv(H)$ separates the points of $K$, we can assume, with no loss of generality, that for each $t \in W$, there is a $\varphi_t \in Conv(H)$ such that $\varphi_t(t) < \varphi_t(x_0)$.

Pick two real numbers $a_t$ and $b_t$ such that $\varphi_t(t) < a_t < b_t < \varphi_t(x_0)$.
Taking $\delta = \frac{1}{4}$ in Lemma \ref{patata}, we can find a polynomial $p_t(x)= (1 -x^m)^n$ such that $p_t(x) < \frac{1}{4}$ for $b_t \le x \le 1$, and $p_t(x) > \frac{3}{4}$ for $0 \le x \le a_t$.
Hence, $p_t(\varphi_t(x_0)) < \frac{1}{4}$ and $p_t(\varphi_t(t)) > \frac{3}{4}$.

Then, for every $t\in W$, we can define
\[U(t):=\{s \in K : p_t(\varphi_t(s))> \frac{3}{4} \},\]
which is
an open neighborhood of  $t$.
Since $W$ is compact, there exist $t_1,...,t_m \in W$ such that $W\subset U(t_1)\cup U(t_2) \cup ... \cup U(t_m)$.
For each $i=1,...,m$ and all $s\in K$ we can  define
$$\varphi_i(s)=p_{t_i}(\varphi_{t_i}(s)).$$
We have $p_{t_i}(\varphi_{t_i}(s)) = ( 1 - [\varphi_{t_i}(s)]^m)^n$ and since $\varphi_{t_i}(s) \in Conv(H)$, we infer, by Proposition~\ref{multiplier},  that so is $\varphi_i=p_{t_i}(\varphi_{t_i})$,
for all $i=1,...,m$.

Let us define $\psi (s) = \varphi_1(s)\vee...\vee\varphi_m(s)$, $s \in K$ and, by Proposition~\ref{patata3},
we know that $\psi$ lies in the uniform closure of $Conv(H)$.
We remark that $\psi(x_0) < \frac{1}{4}$ and $\psi (t) > \frac{3}{4}$, for all $t \in W$ due to the properties of the polynomials $p_{t}(x)$. Now, let us define $$U = \{ s \in K; \psi (s) < \frac{1}{4} \}.$$
Clearly, $U$ is an open neighborhood of $x_0$ in $K$.
We claim that $U$ is contained in $V$. Indeed, if $s\in U$ and $s \notin V$, then $s \in W$ and, consequently,
$\psi (s) > \frac{3}{4}$ which means it cannot be in $U$.

 Take  $0<\delta< \frac{1}{2}$ and let $p$ be the polynomial defined by Lemma~\ref{patata}, applied to $a = \frac{1}{4}$, $b = \frac{3}{4}$ and $\delta/2$. Define $\mu(s)= p(\psi (s))$, for $s \in K$.
By Proposition~\ref{multiplier}, (3) and (4), the function $\mu$ also belongs  to the uniform closure of $Conv(H)$.

If $s \in U$, then $\mu(s) > 1 -\delta/2 $ by construction.
If $s \notin V$, then $s \in W$ and  $\psi (s) > \frac{3}{4}$ gives $\mu(s) < \delta/2$.

Since $\mu$ belongs to the uniform closure of $Conv(H)$,  there exists $\phi \in Conv(H)$ such that $ \parallel \phi - \eta \parallel _{\infty} = \sup_{s \in K} \mid \phi(s) - \mu(s) \mid<\delta/2$.
Consequently, $\phi$ satisfies the desired properties.
\end{proof}

\bigskip
We can now state and prove a version of the Stone-Weierstrass theorem for continuous interval-valued functions, gathering the information obtained above.

For every $x\in K$ let $\overline{f_x}\in C(K,\mathcal{K_{C}})$ be the constant function which takes the constant value $f(x)$.

\begin{theorem}\label{main}
Let $H$ be a subset of $C(K,\mathcal{K_{C}})$ that contains the constant functions. Assuming that $Conv(H)$ separates points, then $H$ is $D_{\infty}$-dense in $C(K, \mathcal{K_{C}})$.
\end{theorem}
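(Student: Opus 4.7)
The plan is, given $f \in C(K, \mathcal{K_C})$ and $\varepsilon > 0$, to construct a Prolla-style partition-of-unity approximant $g \in H$ by stitching finitely many constant functions $\overline{f_{x_i}}$ together with weights drawn from $Conv(H)$ via Lemma~\ref{pat3}. Set $M := \sup_{s \in K} d_H(f(s), 0) < \infty$. For each $x \in K$ pick an open neighborhood $V_x$ with $d_H(f(s), f(x)) < \varepsilon$ on $V_x$, then apply Lemma~\ref{pat3} to obtain an open $U_x \subseteq V_x$ such that, for every $0 < \delta < 1/2$, some $\phi_{x,\delta} \in Conv(H)$ satisfies $\phi_{x,\delta} > 1 - \delta$ on $U_x$ and $\phi_{x,\delta} < \delta$ off $V_x$. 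Extract a finite subcover $\{U_{x_1}, \ldots, U_{x_n}\}$ of $K$, and only then fix $\delta$ with $0 < \delta < \min\{1/2,\, \varepsilon/(2Mn)\}$, writing $\phi_i := \phi_{x_i, \delta}$.

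Build the approximant inductively by $g_1 := \overline{f_{x_1}} \in H$ and $g_k := \phi_k \overline{f_{x_k}} + (1 - \phi_k) g_{k-1}$ for $2 \leq k \leq n$. Since $H$ contains the constants and each $\phi_k$ lies in $Conv(H)$, the defining property of $Conv(H)$ and an induction on $k$ yield $g_k \in H$; in particular $g := g_n \in H$. Unrolling the recursion gives
$$g(t) = \sum_{j=1}^n \alpha_j(t)\, f(x_j), \qquad \alpha_j(t) = \phi_j(t) \prod_{k=j+1}^n (1 - \phi_k(t))$$
for $j \geq 2$, with $\alpha_1(t) = \prod_{k=2}^n (1 - \phi_k(t))$; a telescoping check shows $\sum_j \alpha_j(t) \equiv 1$. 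Because intervals are convex, $\alpha A + \beta A = (\alpha + \beta) A$ for $\alpha, \beta \geq 0$, so $f(t) = \sum_j \alpha_j(t)\, f(t)$, and parts (1)--(2) of Proposition~\ref{properties} deliver the master inequality
$$d_H(g(t), f(t)) \leq \sum_{j=1}^n \alpha_j(t)\, d_H(f(x_j), f(t)).$$

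For each fixed $t$, split the right-hand sum according to whether $t \in V_{x_j}$. Indices with $t \in V_{x_j}$ contribute at most $\varepsilon \sum_j \alpha_j(t) \leq \varepsilon$. Indices with $t \notin V_{x_j}$ require the observation that $\alpha_j(t) < \delta$: for $j \geq 2$ this follows from $\alpha_j(t) \leq \phi_j(t) < \delta$; for $j = 1$, pick any $k_0$ with $t \in U_{x_{k_0}}$ (possible since the $U_{x_k}$ cover $K$), note that $k_0 \neq 1$ because otherwise $t \in U_{x_1} \subseteq V_{x_1}$ would contradict $t \notin V_{x_1}$, and conclude $\alpha_1(t) \leq 1 - \phi_{k_0}(t) < \delta$. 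Bounding $d_H(f(x_j), f(t)) \leq 2M$ on these terms shows their contribution is at most $2Mn\delta < \varepsilon$. Adding both bounds gives $D_\infty(g, f) \leq 2\varepsilon$, proving density. The crux is the simultaneous control of every ``bad'' coefficient at every $t$, which is precisely why the cover size $n$ must be frozen before $\delta$ is tuned.
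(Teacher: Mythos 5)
Your proof is correct and follows essentially the same route as the paper: the same selection Lemma~\ref{pat3}, the same telescoping partition of unity built from products of the $(1-\phi_k)$, the same constant functions $\overline{f_{x_i}}$ as local models, and the same splitting of the error sum according to whether $t$ lies in $V_{x_j}$. The one welcome refinement is your inductive definition $g_k = \phi_k \overline{f_{x_k}} + (1-\phi_k)g_{k-1}$, which makes explicit from the definition of $Conv(H)$ why the final approximant belongs to $H$ --- a point the paper's proof asserts but does not spell out.
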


\begin{proof}
%

Let $f$ be in  $C(K,\mathcal{K_{C}})$ and fix $\varepsilon > 0 $. We  need to find $g\in H$ such that $D_{\infty}(f,g)<\varepsilon$.

Take $x\in K$ and $0<\varepsilon(x)<\varepsilon$ and  let us define the following open neighborhood of $x$:
 $$V(x) := \{ t \in K: d_H(f(t), f(x)) < \varepsilon(x)<\varepsilon \}.$$
Apply, then,  Lemma~\ref{pat3} to get $U(x)$, an open neighborhood of $x$, satisfying the properties there.

Fix any point $x_1 \in K$ and take $W=K\setminus V(x_1)$ which turns out to be a compact set; so we can find 
a finite number of points, namely  $x_2, 	\ldots, x_m$,  in $W$ such that $$W \subset U(x_2) \cup \ldots \cup U(x_m).$$
Take $M= \max_{1\leq i\leq m} \{ D_{\infty}(f, \overline{f_{x_i}})\}$ and $\varepsilon'= \max_{ 1 \le i \le m} \{ \varepsilon(x_i) \}$ and
let us choose $0 < \delta < \frac{1}{2}$ such that $\delta M m < \varepsilon - \varepsilon'$.

On the other hand,  Lemma~\ref{pat3} also gives $\phi_2, \cdots, \phi_m \in Conv(H)$ such that, for all $i=2, \ldots, m$,
\begin{enumerate}
\item[(i)] $\phi_i (t) > 1 - \delta,$ for all $t \in U(x_i)$;
\item[(ii)] $0 \le \phi_i (t) < \delta,\quad if  \enspace  t \notin V(x_i)$.
\end{enumerate}

Let us define the following functions which  belong to $Conv(H)$ as well:
\newline
$\psi_2 := \phi_2$,
\newline
$\psi_3 := ( 1 - \phi_2) \phi_3$,
\newline
$\vdots$
\newline
$\psi_m := ( 1 - \phi_2)( 1 - \phi_3) \cdots ( 1 - \phi_{m-1}) \phi_{m}$.

\medskip
Next we  can compute the sum $\psi_2+\ldots+\psi_j$, $j=2,\ldots,m $
and we are going to show, by induction, that, 

$$\psi_2+\ldots+\psi_j= 1 - ( 1 - \phi_2)( 1 - \phi_3) \cdots ( 1 - \phi_j),\quad j=2,\ldots,m .$$
 It is clear that $\psi_2+\psi_3$ follows the rule since
$$\psi_2 + \psi_3 = \phi_2  + (1 - \phi_2) \phi_3= 1-(1-\phi_2)\cdot(1-\phi_3).$$
 Assume that it is also true for a certain $j\in\{4,...,m-1\}$ and let us check
$$\psi_2+\ldots+\psi_j+\psi_{j+1}= 1 - ( 1 - \phi_2)( 1 - \phi_3) \cdots ( 1 - \phi_j) ( 1 - \phi_{j+1}).$$
Then, it is easily checked that
\[\begin{split}
\psi_2+\ldots+\psi_j+\psi_{j+1}& =1 - ( 1 - \phi_2)( 1 - \phi_3) \cdots ( 1 - \phi_j)\\
&+( 1 - \phi_2)( 1 - \phi_3) \cdots ( 1 - \phi_{j}) \phi_{j+1}\\[1ex]
&=1 - ( 1 - \phi_2)( 1 - \phi_3) \cdots ( 1 - \phi_j) ( 1 - \phi_{j+1})
\end{split}\]
as was to be checked and letting us define $\psi_1 := (1 - \phi_2) \cdots (1 - \phi_m)$, which also belongs to $Conv(H) $ by Proposition~\ref{multiplier}, allows us 
to get $m$ functions in $Conv(H)$ satisfying 
 $$\psi_1 + \psi_2+ \ldots + \psi_m = 1.$$
Next we claim that
\begin{equation}\label{1}
\psi_i(t) < \delta \hspace{0.1in} {\rm for} \hspace{0.04in} {\rm all} \hspace{0.04in} t \notin V(x_i), \hspace{0.04in}i = 1, \ldots, m.
\end{equation}
It is apparent when $i \ge 2$, since $\psi_i(t) \le \phi_i(t) < \delta$ for all $t \notin V(x_i)$ from (ii) above. So we only need to show it for $i=1$.
If $t \notin V(x_1)$, we have $t \in W$.
Hence, $t \in U(x_j)$ for some $j=2,\ldots,m$. From (i), we have $1 - \phi_j(t) < \delta $ and then
$$\psi_1(t) = (1 - \phi_j(t)) \prod_{i \not= j} ( 1 - \phi_i(t)) < \delta.$$
Finally, we can define
\begin{equation}\label{g}
g := \psi_1 \overline{f_{x_1}} + \psi_2\overline{f_{x_2}} + \ldots + \psi_m \overline{f_{x_m}},
\end{equation}
Since $\psi_i\in Conv(H)$ for $i=1,...,m$ (see Definition \ref{multi}), we have  $g \in H$ 
and it remains to show that this function $g$ satisfies the requested property. 

\medskip
Fix now $x_0 \in K$ and, from Proposition \ref{properties}, we get that
\[
\begin{split}
d_H (f(x_0), g(x_0)) &= d_H \left(\sum^m_{i=1} \psi_i (x_0)f(x_0), \sum^m_{i=1} \psi_i (x_0)f(x_i))\right) \\
&\le \sum^m_{i=1} \psi_i (x_0) d_H (f(x_0), f(x_i)).
\end{split}
\]
Let us split the set $\{1,2\ldots,m\}$ into two disjoint sets: $\mathcal{I}=\{ 1 \le i \le m: x_0 \in V(x_i) \}$ and $\mathcal{J}=\{ 1 \le i \le m: x_0 \notin V(x_i) \}$. Then, for all $i \in \mathcal{I}$, we have
$$\psi_i (x_0) d_H (f(x_0), f(x_i)) \le \psi_i(x_0) \varepsilon'$$
and, for all $i \in \mathcal{J}$, the inequality (\ref{1}) yields
$$\psi_i(x_0) d_H (f(x_0),f(x_i)) \le \delta M.$$
From these two inequalities,  we deduce
$$\sum^m_{i=1} \psi_i (x_0) d_H (f(x_0), f(x_i)) \le \sum_{i \in \mathcal{I}} \psi_i(x_0)\varepsilon' + \sum_{i \in \mathcal{J}} \delta Q \le \varepsilon' + \delta Mm < \varepsilon. $$
Finally, gathering all the information above, we infer $d_H (f(x_0), g(x_0)) < \varepsilon$ for all $x_0\in K$, which yields $D_{\infty}(f,g) \le \varepsilon$ as desired.

\end{proof}

Given $J\in \mathcal{K_{C}}$, we shall keep writing $\overline{J}$ to denote the function in $C(K, \mathcal{K_{C}})$ which takes the constant value $J$.
\begin{corollary} \label{demoredes} Given $f \in C(K, \mathcal{K_{C}})$, there exist finitely many functions $ \psi_i \in C(K, [0, 1])$ and $J_i\in \mathcal{K_{C}}$, $i=1,...,m$, such that
$$ D_{\infty}(f, \psi_1 \overline{J_1} + ... + \psi_m \overline{J_m} ) < \varepsilon.$$
\end{corollary}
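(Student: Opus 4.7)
The plan is to deduce this as a direct corollary of Theorem~\ref{main} by choosing a suitable subset $H \subseteq C(K, \mathcal{K_C})$ whose elements already have the required ``sum-of-scalar-multiples-of-constants'' shape. Specifically, I would define
\[
H := \Bigl\{\, \sum_{i=1}^{n} \varphi_i \overline{J_i} \;:\; n\in\mathbb{N},\; \varphi_i \in C(K,[0,1]),\; \sum_{i=1}^{n}\varphi_i \equiv 1,\; J_i \in \mathcal{K_C} \,\Bigr\}.
\]
Every element of $H$ is already of the form demanded by the statement, so once I show $H$ is $D_{\infty}$-dense in $C(K,\mathcal{K_C})$ the corollary is immediate.

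First I would check the two hypotheses of Theorem~\ref{main}. The constant functions are in $H$ (take $n=1$, $\varphi_1 \equiv 1$, $J_1$ arbitrary). For the separation condition, I would argue that $Conv(H) = C(K,[0,1])$: given any $\varphi \in C(K,[0,1])$ and $f = \sum_i \alpha_i \overline{A_i}$, $g = \sum_j \beta_j \overline{B_j}$ in $H$, the combination
\[
\varphi f + (1-\varphi) g \;=\; \sum_{i}(\varphi\alpha_i)\overline{A_i} \;+\; \sum_{j}((1-\varphi)\beta_j)\overline{B_j}
\]
has continuous $[0,1]$-valued coefficients summing to $\varphi\cdot 1 + (1-\varphi)\cdot 1 = 1$, hence lies in $H$. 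Thus $Conv(H) = C(K,[0,1])$, and since $K$ is compact Hausdorff (hence normal), Urysohn's lemma guarantees that $C(K,[0,1])$ separates the points of $K$.

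Now Theorem~\ref{main} applies and produces, for the given $f \in C(K,\mathcal{K_C})$ and $\varepsilon > 0$, a function $g \in H$ with $D_\infty(f,g) < \varepsilon$. Unfolding the definition of $H$ yields the desired expression $g = \psi_1\overline{J_1}+\cdots+\psi_m\overline{J_m}$ with $\psi_i \in C(K,[0,1])$ and $J_i \in \mathcal{K_C}$.

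There is essentially no obstacle here; the only step requiring a small verification is the identity $Conv(H) = C(K,[0,1])$, and it is a one-line algebraic check that uses only the fact that the coefficient sum is preserved by convex combinations. Everything else is bookkeeping: the explicit form (\ref{g}) of the approximant produced inside the proof of Theorem~\ref{main} is literally what this corollary asserts in the abstract.
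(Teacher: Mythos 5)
Your proof is correct, but it takes a different route from the paper's. The paper applies Theorem~\ref{main} with $H=C(K,\mathcal{K_C})$ itself (noting $Conv(C(K,\mathcal{K_C}))=C(K,[0,1])$) and then \emph{reads off} the shape of the approximant from the explicit formula~(\ref{g}) constructed inside the proof of the theorem, with $J_i:=f(x_i)$; so the corollary is really a remark about what the constructive proof produces. You instead keep the theorem as a black box and tailor $H$ to be exactly the set of functions of the target form, verifying the two hypotheses (constants in $H$, and $Conv(H)=C(K,[0,1])$, which separates points by Urysohn since $K$ is compact Hausdorff). Your one-line algebraic check is sound: it only uses $\lambda(I+J)=\lambda I+\lambda J$ and $(\lambda\mu)I=\lambda(\mu I)$ for Minkowski arithmetic, both of which hold, and you avoid the false identity $(\lambda+\mu)I=\lambda I+\mu I$. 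The trade-off: your argument depends only on the \emph{statement} of Theorem~\ref{main}, which is logically cleaner, while the paper's version is shorter and yields the extra information that the intervals $J_i$ can be taken to be values $f(x_i)$ of $f$ and the $\psi_i$ a partition of unity (information that is in fact used later in the neural-network section). Two trivial points you leave implicit: elements of your $H$ are continuous (this follows from Proposition 2.2 together with Proposition~\ref{properties}(1)), and the density conclusion gives distance $<\varepsilon$ after applying the theorem with $\varepsilon/2$.
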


\begin{proof}
Let us first remark that $Conv(C(K, \mathcal{K_{C}}))=C(K, [0,1])$, which clearly separates the points of $K$. Then, it is enough to take $J_i:=f(x_i)$, $i=1,...,m$, in the definition  of the function $g$ in the proof of Theorem \ref{main} (see formula~(\ref{g})).

\end{proof}

\section{A Jackson type approximation result for interval-valued functions.}

\bigskip
As pointed out in the corollary above, the family of all finite sums of the form $\sum_{i=0}^{m} \psi_i(x)A_i$, where $\psi_i(x)$ are continuous functions from $K$ into $[0,1]$ and $A_i$ are closed intervals of the real line, is dense in $C(K,\mathcal{K_C})$. When we consider the particular case $K=[a,b]$, we will be able to provide an upper bound  of the approximation error  between a continuous function and a member of such family, obtaining a Jackson-type result.  Similar results can be in found in, e.g., \cite{Chen}, \cite{Hong-Hahm:2002} and \cite{Hong-Hahm:2016} for classical neural networks and in \cite{gal:94} in the fuzzy setting. We follow the techniques in \cite{Chen}, but some difficulties arise because we deal with interval arithmetic, whose properties differ considerably  from those of the arithmetic of real numbers.
Without loss of generality, we will consider the unit interval instead of $[a,b]$.

\medskip
Let $f\in C([0,1],\mathcal{K_C})$. The modulus of continuity of $f$ is defined to be

\[
\omega(f,\delta):=\sup\{d_H(f(x),f(y))\ : \ x,y\in[0,1]; |x-y|<\delta\}.
\]

Let  $\mathfrak{T}_n$ denote the family
$$\left\{\sum_{i=0}^{n} \psi_i(x)A_i\ :\ \psi_i \in C([0,1],[0,1]); \ A_i\in\mathcal{K_C},=0,1,2,\ldots n\right\}.$$
Define the  approximation error between a member of $\mathfrak{T}_n$ and a continuous function $f\in C([0,1],\mathcal{K_C})$ by
$$
E_{n,f}:=\inf_{g\in \mathfrak{T}_n}D_{\infty}(f,g)
$$

As usual, the main concern when dealing with interval-valued functions is to find a well-behaved substraction for intervals with respect to the Hausdorff metric and with some sort of cancelation law. Namely,  Minkowski difference has not the desired properties but there have been other approaches to provide suitable interval differences (see, for instance, Hukuhara \cite{huku:67}, Markov \cite{markov:77},
Lodwick \cite{lodwick:99}, Chalco-Cano et al \cite{CLB:2014}). Maybe the most used, due to its simplicity, is the  Hukuhara difference which was generalized in 2008 by L. Stefanini (\cite{Stefanini:2008},\cite{Stefanini:2010}). This generalization has the requested properties we need.

Let us denote by $A\circleddash B$ the generalized Hukuhara difference, $gH$-difference for short, defined in  \cite{Stefanini:2008}  by
\[
A\circleddash B =C \Leftrightarrow \begin{cases} A=B+C\\ \text{or}\\ B=A+(-1)C \end{cases}
\]
 It is worth remarking that the $gH$-difference always exists in $\mathcal{K_C}$ and
in fact,
\[
[a^-,a^+]\circleddash [b^-,b^+]=[\min\{a^--b^-,a^+-b^+ \},\max\{a^--b^-,a^+-b^+\}]
\]
If $A=\{a\}$ and $B=\{b\}$ are two singletons, then
$A\circleddash B= a-b$.

We denote by $L(A)$ the diameter or length of the interval $A$; that is, $L(A)=a^+-a^-$, when $A=[a^-,a^+]$.

The following properties will be useful in the sequel and can be found in \cite{Stefanini:2010}:

\begin{proposition}\label{gH}
Let $A=[a^-,a^+]$, $B=[b^-,b^+]$  be in $\mathcal{K_C}$. Then
\begin{enumerate}
\item $A\circleddash A={0}$.
\item Either
 $A+(B\circleddash A)=B$ or $B-(B\circleddash A)=A$.
\item If $L(A)\leq L(B)$, then $A+(B\circleddash A)=B$.
\item $d_H(A\circleddash B, 0)=d_H(A, B)$
\end{enumerate}
\end{proposition}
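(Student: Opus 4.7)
The plan is to derive all four items from two ingredients: the explicit endpoint formula
\[
[a^-,a^+]\circleddash[b^-,b^+]=\bigl[\min\{a^--b^-,a^+-b^+\},\,\max\{a^--b^-,a^+-b^+\}\bigr]
\]
stated immediately before the proposition, together with the defining dichotomy for $C=A\circleddash B$ (either $A=B+C$ or $B=A+(-1)C$). Items (1), (2) and (4) fall out almost by inspection; only (3) requires a short case analysis.

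First, for (1), I would substitute $B=A$ in the endpoint formula; both $\min$ and $\max$ collapse to $0$, so $A\circleddash A=\{0\}$. Next, for (2), I would simply apply the defining dichotomy to $C:=B\circleddash A$; its two alternatives $B=A+C$ and $A=B+(-1)C$ are precisely the two conclusions of the statement. For (4), writing $C:=A\circleddash B=[c^-,c^+]$, I would use the fact that the Pompeiu-Hausdorff distance of any interval to the singleton $\{0\}$ equals $\max\{|c^-|,|c^+|\}$; by the endpoint formula the unordered pair $\{c^-,c^+\}$ coincides with $\{a^--b^-,\,a^+-b^+\}$, so this maximum equals $\max\{|a^--b^-|,|a^+-b^+|\}=d_H(A,B)$.

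The substantive step is (3). Setting $C:=B\circleddash A$, I would split cases on the sign of $(b^--a^-)-(b^+-a^+)$. The inequality $b^--a^-\le b^+-a^+$ is exactly $L(A)\le L(B)$, and in this case the endpoint formula gives $C=[b^--a^-,\,b^+-a^+]$; a one-line Minkowski sum then produces $A+C=[b^-,b^+]=B$. Under the hypothesis of (3) we always land in this case, so the identity holds. The only obstacle worth flagging is bookkeeping in this case distinction: one must check that the sign condition really corresponds to $L(A)\le L(B)$, and that in the opposite case (strict inequality $L(A)>L(B)$) the sum $A+(B\circleddash A)$ need not equal $B$, so the hypothesis is genuinely needed. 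Everything else is mechanical.
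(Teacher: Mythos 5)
Your proof is correct. Note that the paper itself offers no proof of this proposition: it simply cites Stefanini's 2010 article, so there is no argument in the text to compare yours against. Your direct verification from the endpoint formula $[a^-,a^+]\circleddash[b^-,b^+]=[\min\{a^--b^-,a^+-b^+\},\max\{a^--b^-,a^+-b^+\}]$ is a legitimate, self-contained way to fill that gap: (1) and (4) are immediate from that formula together with $d_H([c^-,c^+],\{0\})=\max\{|c^-|,|c^+|\}$, (2) is just the defining dichotomy read off for $C=B\circleddash A$ (with the statement's ``$B-(B\circleddash A)=A$'' interpreted as $A=B+(-1)C$, which is how the definition is phrased), and your case analysis for (3) correctly identifies $L(A)\le L(B)$ with $b^--a^-\le b^+-a^+$, after which the Minkowski sum collapses to $B$. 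Your remark that the hypothesis in (3) is genuinely needed is also right: when $L(A)>L(B)$ the sum $A+(B\circleddash A)$ has length $2L(A)-L(B)\neq L(B)$.
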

The cancelation law (item (3) above) allows us to provide the following property:
\begin{proposition}\label{propo:cancel}
Let $\{A_j\}_{j=0}^{n}$ be a family of subsets of $\mathcal{K_C}$.
\begin{enumerate}
\item If
$L(A_j)\geq L(A_{j+1})$, for all $j=0,1,\ldots, n-1$
then,
for any $0\leq k\leq n-1$,
\[
A_{n}+\sum_{j=k}^{n-1}\left(A_{j}\circleddash A_{j+1}\right)=A_{k}
\]
\item
If
$L(A_j)\leq L(A_{j+1})$, for all $j=0,1,\ldots, n-1$
then,
for any $0\leq k\leq n-1$,
\[
A_{0}+\sum_{j=0}^{k}\left(A_{j+1}\circleddash A_{j}\right)=A_{k+1}
\]
\end{enumerate}
\end{proposition}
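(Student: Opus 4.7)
My plan is to prove both items by a straightforward finite induction, using Proposition~\ref{gH}(3) as the single substantive ingredient. The point is that the length hypothesis is precisely what turns the $gH$-difference into a genuine two-sided inverse for Minkowski addition, so the telescoping sums collapse exactly as they would in a group.

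For part (1), I would proceed by downward induction on $k$, starting from $k=n-1$. The base case $A_n+(A_{n-1}\circleddash A_n)=A_{n-1}$ is an immediate application of Proposition~\ref{gH}(3), since $L(A_n)\leq L(A_{n-1})$ by hypothesis. For the inductive step, assuming
$A_n+\sum_{j=k+1}^{n-1}(A_j\circleddash A_{j+1})=A_{k+1}$, I would add the term $A_k\circleddash A_{k+1}$ to both sides and reorder the Minkowski sum (which is commutative and associative) so that the inductive hypothesis replaces the block beginning with $A_n$ by $A_{k+1}$, leaving
$A_{k+1}+(A_k\circleddash A_{k+1})$; another invocation of Proposition~\ref{gH}(3), now with $L(A_{k+1})\leq L(A_k)$, yields $A_k$.

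Part (2) is the mirror image and goes by upward induction on $k$. The base case $k=0$ is $A_0+(A_1\circleddash A_0)=A_1$, which is Proposition~\ref{gH}(3) in the form with $L(A_0)\leq L(A_1)$. The inductive step uses $A_0+\sum_{j=0}^{k-1}(A_{j+1}\circleddash A_j)=A_k$ to reduce the sum $A_0+\sum_{j=0}^{k}(A_{j+1}\circleddash A_j)$ to $A_k+(A_{k+1}\circleddash A_k)$, and one last application of Proposition~\ref{gH}(3) produces $A_{k+1}$.

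The main thing to be careful about, rather than a real obstacle, is the rearrangement of Minkowski sums at each step: since $\mathcal{K_C}$ is only a cone and not a group, one cannot freely subtract or move arbitrary terms, but Minkowski addition is still commutative and associative, which is all that the telescoping argument needs. The length inequalities must be checked in the correct direction at each invocation of Proposition~\ref{gH}(3); the monotonicity assumption $L(A_j)\geq L(A_{j+1})$ (respectively $L(A_j)\leq L(A_{j+1})$) is exactly what guarantees that the relevant pair satisfies the hypothesis of item (3), so no case split between the two clauses of the definition of the $gH$-difference is required.
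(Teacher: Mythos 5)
Your proof is correct and follows essentially the same route as the paper: the paper unrolls the telescoping sum step by step using the cancellation $A_{j+1}+(A_j\circleddash A_{j+1})=A_j$ from Proposition~\ref{gH}(3), which is exactly your inductive step written out explicitly. The only difference is presentational (formal induction versus explicit unrolling), and your remark that associativity and commutativity of Minkowski addition suffice despite $\mathcal{K_C}$ not being a group is a point the paper leaves implicit.
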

\begin{proof}
For (1) we use that, for every $j$, $A_{j+1}+(A_j\circleddash A_{j+1})=A_{j}$.
\[
\begin{split}
 A_{n}+\sum_{j=k+1}^{n-1}\left(A_{j}\circleddash A_{j+1}\right)&= A_{n}+\left(A_{n-1}\circleddash A_{n}\right)\\
&+\left(A_{n-2}\circleddash A_{n-1}\right) + \ldots\\[2ex]
&=A_{n-1} + \left(A_{n-2}\circleddash A_{n-1}\right) \\
&+ \left(A_{n-3}\circleddash A_{n-2}\right)+ \ldots\\[2ex]
&\vdots\\
&=A_{k+2} +\left(A_{k+1}\circleddash A_{k+2}\right)+\left(A_{k}\circleddash A_{k+1}\right)\\
&=A_{k+1} +\left(A_{k}\circleddash A_{k+1}\right) =A_{k}\\
\end{split}
\]
The proof of (2) is  similar but using $A_{j}+(A_{j+1}\circleddash A_{j})=A_{j+1}$, for every $j$.
\end{proof}

\medskip
Next set $A_j:=f(j/n)\in\mathcal{K_C}$ for  $j=0,1,\ldots,n-1$, where $f\in C([0,1],\mathcal{K_C})$.
Then, Proposition~\ref{propo:cancel} forces us to demand some kind of
 monotonicity as follows: for a continuous function $f\in C([0,1],\mathcal{K_C})$ 
 we define the length function $len(f) : [0,1]\rightarrow \mathbb{R}$ as 
 \[
 len(f)(x)=f(x)^+ - f(x)^-,\quad \text{ for all $x \in[0,1]$}
\]
where $f(x)=[f(x)^-,\, f(x)^+]$.
 As usual, the length function is said non-increasing when 
\begin{equation}\label{eq:length1}
len(f)(x)\geq len(f)(y),\quad \text{when $x\leq y$.}
\end{equation}
and non-decreasing when
\begin{equation}\label{eq:length2}
len(f)(x)\leq len(f)(y),\quad \text{when $x\leq y$.}
\end{equation}

For instance, type (i)-gH-differentiable functions  satisfy \eqref{eq:length2} and 
type (ii)-gH-differentiable functions  satisfy \eqref{eq:length1} (see \cite[Proposition 3.8]{Chalco:2021}).

So, for every $j=0,1,\ldots n-1$, we have:
$$A_{j+1}+(A_j\circleddash A_{j+1})=A_{j},\quad  \text{when~\eqref{eq:length1} is satisfied.}$$
and
$$A_{j}+(A_{j+1}\circleddash A_{j})=A_{j+1},\quad \text{when~\eqref{eq:length2} is satisfied.}$$

Now, we are ready to show the following Jackson-type theorem:

\begin{theorem}\label{th:degree1}
Let $f\in C([0,1],\mathcal{K_C})$ satisfying (\ref{eq:length1}). Then, for every $n\in\mathbb{N}$,

\[E_{n,f} \leq 2\omega \left(f,\dfrac{1}{n}\right)\]

\end{theorem}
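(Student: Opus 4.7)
The plan is to exhibit an explicit candidate $g \in \mathfrak{T}_n$ that interpolates $f$ at the equidistant nodes $x_k := k/n$ ($k=0,1,\ldots,n$) and to estimate $d_H(f(x),g(x))$ locally on each subinterval $[x_k,x_{k+1}]$. Set $A_j := f(x_j)$; hypothesis~\eqref{eq:length1} yields $L(A_j) \geq L(A_{j+1})$ for all $j$, so the cancelation identity of Proposition~\ref{propo:cancel}(1) is available. Introduce continuous piecewise-linear coefficients $\psi_j \in C([0,1],[0,1])$, $j = 0,\ldots, n-1$, defined by $\psi_j(x) = 1$ on $[0, j/n]$, $\psi_j(x) = 0$ on $[(j+1)/n, 1]$, and $\psi_j(x) = (j+1) - nx$ on $[j/n,(j+1)/n]$. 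Define
\[
g(x) := A_n + \sum_{j=0}^{n-1} \psi_j(x)\bigl(A_j \circleddash A_{j+1}\bigr),
\]
which belongs to $\mathfrak{T}_n$ (a constant piece plus $n$ scaled pieces of the required form, totalling $n+1$ summands).

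The first step is to simplify $g$ on a generic subinterval. For $x \in [k/n,(k+1)/n]$ with $0 \leq k \leq n-1$, one checks that $\psi_j(x) = 0$ for $j < k$ and $\psi_j(x) = 1$ for $j > k$, so
\[
g(x) = A_n + \Bigl(\sum_{j=k+1}^{n-1}(A_j \circleddash A_{j+1})\Bigr) + \psi_k(x)\bigl(A_k \circleddash A_{k+1}\bigr).
\]
Collapsing the telescoping tail via Proposition~\ref{propo:cancel}(1) (with the convention that the empty sum equals $0$ when $k = n-1$) yields the compact formula
\[
g(x) = A_{k+1} + \psi_k(x)\bigl(A_k \circleddash A_{k+1}\bigr), \qquad x \in [k/n,(k+1)/n].
\]
In particular $g(x_k) = A_k = f(x_k)$ at every node.

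The second step is the error estimate. Writing $t := \psi_k(x) \in [0,1]$, the triangle inequality combined with Proposition~\ref{properties}(5) (translation invariance), Proposition~\ref{properties}(2) (scalar homogeneity) and Proposition~\ref{gH}(4) gives
\[
\begin{split}
d_H(f(x),g(x)) &\leq d_H\bigl(f(x), f((k+1)/n)\bigr) + d_H\bigl(A_{k+1}, A_{k+1} + t(A_k \circleddash A_{k+1})\bigr) \\
&= d_H\bigl(f(x), f((k+1)/n)\bigr) + t\, d_H(A_k, A_{k+1}).
\end{split}
\]
Since $|x - (k+1)/n| \leq 1/n$ and $|x_k - x_{k+1}| = 1/n$, each of the two summands is bounded by $\omega(f, 1/n)$, and $t \leq 1$ delivers $d_H(f(x), g(x)) \leq 2\omega(f, 1/n)$ uniformly in $x \in [0,1]$, which is the asserted bound on $E_{n,f}$.

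The main obstacle is the non-group structure of $(\mathcal{K_C}, +)$, which forbids the usual telescoping identity $\sum(A_j - A_{j+1}) = A_0 - A_n$. The length-monotonicity assumption~\eqref{eq:length1} is tailored to restore this identity through the $gH$-difference via Proposition~\ref{propo:cancel}(1); without it the intermediate values $A_k$ cannot be reconstructed from $A_n$ and successive $gH$-differences, and the interpolation-plus-telescoping strategy breaks down. Under the dual hypothesis~\eqref{eq:length2} the argument would be mirrored, anchoring the construction at $A_0$ and using $A_{j+1} \circleddash A_j$ together with Proposition~\ref{propo:cancel}(2).
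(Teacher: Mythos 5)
Your proof is correct and follows essentially the same route as the paper: the same ansatz $g(x) = A_n + \sum_{j}\psi_j(x)\bigl(A_j \circleddash A_{j+1}\bigr)$, the same telescoping via Proposition~\ref{propo:cancel}(1), and the same combination of Propositions~\ref{properties} and~\ref{gH}(4) for the error estimate. The only deviation is your use of exact piecewise-linear transition coefficients in place of the near-indicator polynomials from Lemma~\ref{patata}; this makes $g$ interpolate $f$ at the nodes, merges the paper's two cases into one, and eliminates the $\varepsilon$-slack and the $k\epsilon' 2M$ bookkeeping, so it is a clean (if minor) simplification of the same argument.
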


\begin{proof}
Fix $\varepsilon>0$ and $n\in \mathbb{N}$. Consider the points $a_j=\frac{j}{n}$, $j=0,1,\ldots,n$. Take $0<\delta<\frac{1}{2n}$.
Being $f$ a continuous function on a compact set, it is bounded, so we can find  a constant $M>0$ such that $d_H(f(x),f(y))\leq 2M$, for all $x,y\in[0,1]$.
Take $\epsilon'=\frac{\varepsilon}{2nM}$ and apply Lemma \ref{patata} to $a_j-\delta<a_j+\delta$. So we can find $n+1$ continuous functions
$\phi_j$,  $j=0,1,\ldots n$, with
\begin{enumerate}
\item $\phi_j(t)<\epsilon'$, for $t\geq a_j+\delta$, for all $j=0,1,\ldots, n-1$;
\item $\phi_j(t)>1-\epsilon'$, for $t\leq a_j-\delta$, for all $j=1,\ldots, n$.
\end{enumerate}

%

For every $j=0,1,\ldots,n$, we denote by  $\psi_j$ the restriction of $\phi_j$ to the interval $[0,1]$ and set
$A_j=f(a_j)$. Define now

$$g(x):=A_{n}+\sum_{j=0}^{n-1} \psi_j (x)\Big(A_{j}\circleddash A_{j+1}\Big)\in\mathfrak{T}_n$$

Now, take any $x\in[0,1]$. We are going to study the value of $d_H(f(x),g(x))$ and two cases arise depending on where $x$  is located.

\medskip
\textsc{Case 1.} There exists $k$ with $x\in [a_k-\delta,a_k+\delta]$ (or $0\leq x\leq\delta$ if $k=0$ or $1-\delta\leq x\leq 1$ if $k=n$).

In this case, $x>a_j+\delta$, for $j<k$ and $x<a_j-\delta$, for $j>k$. Then
$\psi_j(x)<\epsilon'$ for $j<k$ and $1-\psi_j(x)<\epsilon'$, for $j>k$.

Then we can rewrite $g(x)$ as
\begin{equation}\label{eq:rewriteg1}
\begin{split}
g(x)&=\sum_{j=0}^{k-1} \psi_j (x)\Big(A_{j}\circleddash A_{j+1}\Big)\\
&+ \psi_k(x)\Big(A_{k}\circleddash A_{k+1}\Big)\\
&+\sum_{j=k+1}^{n-1} \psi_j (x)\Big(A_{j}\circleddash A_{j+1}\Big)+A_{n}\\
\end{split}
\end{equation}
Using (5) from Proposition \ref{properties} and (1) from Proposition~\ref{propo:cancel}, we have
\[
\begin{split}
d_H(g(x),f(x))&=d_H(g(x)+A_{k},\ f(x)+A_{k})\\
&=d_H\Big(g(x)+A_{k},\ f(x)+A_{n}+(A_{k}\circleddash A_{k+1})+\sum_{j=k}^{n-1}\left(A_{j}\circleddash A_{j+1}\right) \Big)\\
\end{split}
\]

Finally, by (1) and (2) in Proposition~\ref{properties}, we infer
\[
\begin{split}
d_H(g(x),f(x))&\leq \sum_{j=0}^{k-1}  \psi_j (x) d_H(A_{j}\circleddash  A_{j+1},\ \mathbf{0}) \\
&+  d_H\Big(\psi_k(x)(A_{k}\circleddash  A_{k+1}),\ A_{k}\circleddash  A_{k+1}\Big)\\
&+\sum_{j=k+1}^{n-1} d_H\Big(\psi_j(x)(A_{j}\circleddash  A_{j+1}),\ A_{j}\circleddash  A_{j+1}\Big)\\
&+d_H(A_k,\ f(x))\\
&\leq \sum_{j=0}^{k-1}  \psi_j (x) d_H(A_{j},\  A_{j+1}) \\
&+  (1-\psi_k(x))d_H(A_{k},\   A_{k+1})\\
&+\sum_{j=k+1}^{n-1} (1-\psi_j(x))d_H(A_{j},\  A_{j+1})\\
&+d_H(A_k,\ f(x))\\
 &\leq k\epsilon'\; 2M + \omega(f,1/n) + (n-1-k)\epsilon'\; 2M +\omega(f,1/n)\\
 &< 2\omega(f,1/n)+\varepsilon
\end{split}
\]

\textsc{Case 2.} There exists $0\leq k\leq n-1$ with $x\in (a_k+\delta, a_{k+1}-\delta)$.
In this case, $x>a_j+\delta$, for $j\leq k$ and $x<a_j-\delta$, for $j>k$. Then
$\psi_j(x)<\epsilon'$ for $j\leq k$ and $1-\psi_j(x)<\epsilon'$, for $j>k$.

Now, we can rewrite $g(x)$ as
\begin{equation}\label{eq:rewriteg2}
\begin{split}
g(x)&=\sum_{j=0}^{k} \psi_j (x)\Big(A_{j}\circleddash A_{j+1}\Big)\\
&+\sum_{j=k+1}^{n-1} \psi_j (x)\Big(A_{j}\circleddash A_{j+1}\Big)+A_{n}\\
\end{split}
\end{equation}

And, as in the previous case, but bearing in mind that now $d_H(A_{k+1},f(x))\leq \omega(f,1/n)$ as well,
\[
\begin{split}
d_H(g(x),f(x))&=d_H(g(x)+A_{k+1}, \ f(x)+A_{k+1})\\
&=d_H\Big(g(x)+A_{k+1},\  f(x)+A_n+\sum_{j=k+1}^{n-1} (A_j\circleddash A_{j+1})\Big)\\
&\leq  \sum_{j=0}^{k}  \psi_j (x) d_H(A_{j},  A_{j+1}) \\
&+\sum_{j=k+1}^{n-1}(1- \psi_j (x)) \, d_H(A_{j},A_{j+1})\\
 &+d_H(A_{k+1},f(x))\\
 &\leq (k+1)\epsilon'\; 2M + (n-1-k)\epsilon'\; 2M +\omega(f,1/n)\\
 &< 2\omega(f,1/n)+\varepsilon
\end{split}
\]

So we get, for all $x\in[0,1]$,
$$
d_H(g(x),f(x))\leq 2\omega\left(f,\frac{1}{n}\right) +\varepsilon
$$
which means, $$D_{\infty}(f,g)=\sup_{x\in[0,1]} d_H(f(x),g(x))\leq 2\omega\left(f,\frac{1}{n}\right) +\varepsilon$$


Then,

$$\inf_{h\in \mathfrak{T}_n}D_{\infty}(f,h)\leq D_{\infty}(f,g)\leq 2\omega\left(f,\frac{1}{n}\right) +\varepsilon$$
and being true for all $\varepsilon>0$, we finally get
$$E_{n,f} \leq 2\omega\left(f,\frac{1}{n}\right). $$

\end{proof}

\begin{theorem}\label{th:degree2}
Let $f\in C([0,1],\mathcal{K_C})$ satisfying \ref{eq:length2}. Then, for every $n\in\mathbb{N}$,
$$
E_{n,f} \leq 2\omega \left(f,\dfrac{1}{n} \right)
$$
\end{theorem}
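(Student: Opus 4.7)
The strategy is to mirror the proof of Theorem \ref{th:degree1}, reversing the orientation throughout: I would anchor at the \emph{left} endpoint $A_0 = f(0)$ instead of at $A_n = f(1)$, use the forward gH-differences $A_{j+1} \circleddash A_j$ instead of the backward ones $A_j \circleddash A_{j+1}$, and exploit part (2) of Proposition \ref{propo:cancel} in place of part (1). This swap is possible precisely because hypothesis \eqref{eq:length2} gives $L(A_j) \leq L(A_{j+1})$, which is the length condition under which the telescoping identity $A_0 + \sum_{j=0}^{k-1}(A_{j+1} \circleddash A_j) = A_k$ holds.

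Concretely, I would partition $[0,1]$ by the same nodes $a_j = j/n$, set $A_j := f(a_j)$, pick $0 < \delta < 1/(2n)$ and $\varepsilon' = \varepsilon/(2nM)$ with $M$ a bound for $d_H(f(x),f(y))$, and construct continuous \emph{step-up} functions $\widetilde{\psi}_j : [0,1] \to [0,1]$, for $j = 0, 1, \ldots, n-1$, satisfying $\widetilde{\psi}_j(t) > 1 - \varepsilon'$ for $t \geq a_j + \delta$ and $\widetilde{\psi}_j(t) < \varepsilon'$ for $t \leq a_j - \delta$. These can be taken simply as $\widetilde{\psi}_j := 1 - \psi_j$, where the $\psi_j$ are the step-down functions built in the proof of Theorem \ref{th:degree1} via Lemma \ref{patata}. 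The candidate approximant is then
\[
g(x) := A_0 + \sum_{j=0}^{n-1} \widetilde{\psi}_j(x)\bigl(A_{j+1} \circleddash A_j\bigr) \in \mathfrak{T}_n.
\]

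The estimation of $d_H(g(x), f(x))$ follows a two-case analysis identical in spirit to that of Theorem \ref{th:degree1}. For $x \in [a_k - \delta, a_k + \delta]$ (Case 1), I would add $A_k$ to both sides of the distance; using the telescoping identity from Proposition \ref{propo:cancel} (2) on the right-hand side, cancelling the common summand $A_0$ via Proposition \ref{properties} (5), and pairing terms through Proposition \ref{properties} (1), the contributions for $j < k$ and $j > k$ are each bounded by $\varepsilon' \cdot 2M$; the transitional middle term reduces by Propositions \ref{properties} (5), (3) and \ref{gH} (4) to $\widetilde{\psi}_k(x)\, d_H(A_{k+1}, A_k) \leq \omega(f, 1/n)$; and $d_H(A_k, f(x)) \leq \omega(f, 1/n)$. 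The total is at most $2\omega(f, 1/n) + \varepsilon$. For $x \in (a_k + \delta, a_{k+1} - \delta)$ (Case 2), the natural anchor is $A_{k+1}$ (again via Proposition \ref{propo:cancel} (2)), there is no transitional term, and the same pairing produces at most $\omega(f, 1/n) + \varepsilon$. The boundary cases $k = 0$ and $k = n$ of Case 1 involve empty sums and are handled along identical lines. Letting $\varepsilon \to 0$ gives the claimed bound.

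The main technical point is purely orientational: since the cancellation $A + (B \circleddash A) = B$ of Proposition \ref{gH} (3) requires $L(A) \leq L(B)$, one must arrange the telescoping in the direction compatible with the non-decreasing length hypothesis, which forces anchoring at $A_0$ rather than $A_n$ and reversing every gH-difference. Once this reorientation is in place, the remainder of the bookkeeping is a direct mirror of Theorem \ref{th:degree1} and requires no essentially new idea.
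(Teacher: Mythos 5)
Your proposal is correct and matches the paper's own (very terse) proof exactly: the paper likewise defines $g(x)=A_0+\sum_{j=0}^{n-1}\psi_j(x)(A_{j+1}\circleddash A_j)$ with $\psi_j=1-\phi_j$ and invokes part (2) of Proposition \ref{propo:cancel}, leaving the mirrored two-case estimate implicit. You have simply supplied the bookkeeping the paper omits, with no deviation in method.
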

\begin{proof}
The proof goes parallel to the previous theorem but taking $\psi_i=1-\phi_i$ and considering the function  in $\mathfrak{T}_n$ defined by
\[
g(x):=A_0+\sum_{j=0}^{n-1}\psi_j(x)(A_{j+1}\circleddash A_{j})\in\mathfrak{T}_n
\]
\end{proof}

\section{Approximation by means of interval-valued neural networks}

In \cite{ishibuchi-tanaka:93} (see also \cite{patil:95}), an architecture of neural networks with interval weights and interval biases was proposed. Such networks maps an input  of real numbers to an output interval and they are called interval neural networks.

Based on the results of the previous section and taking advantage of an striking result by Guliyev and Ismailov (\cite{GI}), we show how an interval-valued continuous functions can be approximated using interval neural networks. Namely, we shall deal with interval neural networks of the following form:
$$H(x) = \sum^m_{i=1} W_i \left(c_{i1} \cdot \sigma ( x - \theta_{i1} )+c_{i2} \cdot \sigma ( x - \theta_{i2} )\right)$$
for each $x\in \mathbb{R}$, where  the weights $W_i \in \mathcal{K_C}$ and the weights $c_{ij}$ and the thresholds $\theta_{ij}$ are real numbers for $i=1,...m$ and $j=1,2$. Here $\sigma: \mathbb{R} \to \mathbb{R}$ stands for the activation function in the hidden layer.


\begin{theorem}
Let $K$ be a compact subset of $\mathbb{R}$ and let $f \in C(K, \mathcal{K_{C}})$ and $\varepsilon>0$.
Then there exists a sigmoidal function $\sigma: \mathbb{R} \to \mathbb{R}$ and an interval neural network $H(x)$ as above such that $D_{\infty}(f,H) < \varepsilon.$
\end{theorem}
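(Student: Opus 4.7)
The strategy is to reduce the problem to scalar uniform approximation of finitely many real-valued coefficient functions, and then invoke the universal two-neuron sigmoidal approximation theorem of Guliyev and Ismailov.

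First I would apply Corollary~\ref{demoredes} with tolerance $\varepsilon/2$ to produce continuous functions $\psi_1,\dots,\psi_m\in C(K,[0,1])$ and intervals $J_1,\dots,J_m\in\mathcal{K_C}$ with
$$D_\infty\Big(f,\;\sum_{i=1}^m\psi_i\overline{J_i}\Big)<\frac{\varepsilon}{2}.$$
Set $M:=1+\sum_{i=1}^m d_H(J_i,0)$ and $\eta:=\varepsilon/(2M)$. Next I would invoke the result of \cite{GI}: there exists a single sigmoidal activation $\sigma\colon\mathbb{R}\to\mathbb{R}$ such that every continuous real-valued function on a compact subset of $\mathbb{R}$ can be approximated uniformly to within any prescribed tolerance by an expression of the form $c_1\sigma(x-\theta_1)+c_2\sigma(x-\theta_2)$ with suitable real parameters. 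Applying it to each $\psi_i$ yields reals $c_{i1},c_{i2},\theta_{i1},\theta_{i2}$ and scalar approximants
$$\varphi_i(x):=c_{i1}\sigma(x-\theta_{i1})+c_{i2}\sigma(x-\theta_{i2})\quad\text{with}\quad\sup_{x\in K}|\psi_i(x)-\varphi_i(x)|<\eta.$$

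Setting $W_i:=J_i$, I would then define
$$H(x):=\sum_{i=1}^m W_i\bigl(c_{i1}\sigma(x-\theta_{i1})+c_{i2}\sigma(x-\theta_{i2})\bigr),$$
which has exactly the required interval neural network architecture. For each fixed $x\in K$, the subadditivity bound of Proposition~\ref{properties}(1) combined with the scalar estimate $d_H(\alpha A,\beta A)\le|\alpha-\beta|\,d_H(A,0)$ applied termwise gives
$$d_H\Big(H(x),\,\sum_{i=1}^m\psi_i(x)J_i\Big)\le\sum_{i=1}^m|\varphi_i(x)-\psi_i(x)|\,d_H(J_i,0)<\eta\sum_{i=1}^m d_H(J_i,0)<\frac{\varepsilon}{2}.$$
Combining this with the first step via the triangle inequality for $D_\infty$ yields $D_\infty(f,H)<\varepsilon$.

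The main obstacle is a minor but genuine one: the approximants $\varphi_i$ need not remain non-negative, so Proposition~\ref{properties}(3), which is stated for $\alpha,\beta\ge 0$, does not apply verbatim in the termwise estimate above. However, the bound $d_H(\alpha A,\beta A)\le|\alpha-\beta|\,d_H(A,0)$ remains valid for arbitrary real $\alpha,\beta$: indeed, for every $a\in A$ one has $|\alpha a-\beta a|=|\alpha-\beta||a|\le|\alpha-\beta|\,d_H(A,0)$, so every point of $\alpha A$ lies within $|\alpha-\beta|\,d_H(A,0)$ of a point of $\beta A$ and vice versa. This small extension of Proposition~\ref{properties}(3) is what legitimizes the computation, and together with the universality of the sigmoidal $\sigma$ supplied by \cite{GI} it completes the argument.
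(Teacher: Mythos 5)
Your proof follows essentially the same route as the paper: reduce to the convex-combination form via Corollary~\ref{demoredes}, approximate each coefficient function $\psi_i$ by the two-neuron sigmoidal expression of Guliyev--Ismailov, and assemble the interval network with $W_i = J_i$. The extra care you take is warranted and in fact improves on the paper's own writeup: the paper applies Proposition~\ref{properties}(3) as an equality even though the sigmoidal approximants may take negative values, and it divides by $d_H(J_i,0)$, which could vanish; your inequality $d_H(\alpha A,\beta A)\le|\alpha-\beta|\,d_H(A,0)$ for arbitrary real $\alpha,\beta$ and your single tolerance $\eta=\varepsilon/(2M)$ with $M=1+\sum_i d_H(J_i,0)$ repair both points.
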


\begin{proof}
By Corollary \ref{demoredes}, there exist finitely many functions $ \psi_i \in C(K, [0, 1])$ and $J_i\in \mathcal{K_{C}}$, $i=1,...,m$, such that
$$ D_{\infty}(f, \psi_1 \widehat{J_1} + ... + \psi_m \widehat{J_m} ) < \frac{\varepsilon}{2}.$$

On the other hand, by \cite[Theorem 4.2]{GI}, we know that, for each $\psi_i$, $i=1,...,m$, there exist weights $c_{i1}$, $c_{i2}$, thresholds $\theta_{i1}$, $\theta_{i2}$, in $\mathbb{R}$ and a sigmoidal function $\sigma: \mathbb{R} \to \mathbb{R}$ such that

$$\left| \psi_i(x) - \sum^m_{i=1} \left(c_{i1} \cdot \sigma ( x - \theta_{i1} )+c_{i2} \cdot \sigma ( x - \theta_{i2} )\right) \right|< \frac{\varepsilon}{2m\cdot d_H(J_i,0)},$$
for all $x \in K$. Hence
\[
\begin{split}
&D_{\infty}\left(\left(c_{i1} \cdot \sigma ( x - \theta_{i1} )+c_{i2} \cdot \sigma ( x - \theta_{i2} )\right)\widehat{J_i}, \psi_i(x) \cdot \widehat{J_i}\right)\\
&=\sup_{x \in K} d_H \left(\left(c_{i1} \cdot \sigma ( x - \theta_{i1} )+c_{i2} \cdot \sigma ( x - \theta_{i2} )\right)J_i,\psi_i(x) \cdot J_i\right) \\
&= \sup_{x \in K} \left| \psi_i(x) - \left(c_{i1} \cdot \sigma ( x - \theta_{i1} )+c_{i2} \cdot \sigma ( x - \theta_{i2} )\right) \right| d_H(J_i,0)<\frac{\varepsilon}{2m}
\end{split}
\]
which clearly yields
$$ D_{\infty}\left(f, \hspace{0.05in}\sum^m_{i=1} \widehat{J_i} \left(c_{i1} \cdot \sigma ( x - \theta_{i1} )+c_{i2} \cdot \sigma ( x - \theta_{i2} )\right)\right) < \varepsilon.$$

\end{proof}

 \section{Conclusion}
We have proved that, under certain natural assumptions and using only straightforward concepts, we can uniformly approximate, to any degree of accuracy,  any continuous function from a compact Hausdorff space into the space of closed intervals, which yields a Stone-Weierstrass type result in this setting.
%


\begin{thebibliography}{widest-label}

\bibitem{aubin-franskowska:1990} J. P. Aubin, H. Franskowska, \textit{Set Valued Analysis}, Birkhauser, London (1990).

%



\bibitem{DK2} P. Diamond, P. Kloeden,
\textit{Metric Spaces of Fuzzy Sets: Theory and Applications}, World Scientific, Singapore, (1994).

\bibitem{dwyer} P. S. Dwyer, \textit{ Linear Computations}, John Wiley and Sons, New York, (1951).

%



\bibitem{FSS:17}
J. J. Font, D. Sanchis, M. Sanchis, \textit{A version of the Stone-Weierstrass theorem in fuzzy analysis}
J. Nonlinear Sci. Appl., 10 (2017), 4275--4283.



\bibitem{GI} N.J. Guliyev, V.E. Ismailov,
\textit{On the approximation by single hidden layer feedforward neural
networks with fixed weights}, Neural Networks, {\bf 98} (2018),
296--304.







\bibitem{ishibuchi-tanaka:93} H. Ishibuchi, H. Tanaka, H. Okada,
\textit{An architecture of neural networks with interval weights and its application to fuzzy regresion analysis}, Fuzzy Sets and Systems, {\bf 57} (1993),
27-39.



\bibitem{Je} R.I. Jewett,
\textit{A variation on the Stone-Weierstrass theorem}, Proc. Amer. Math. Soc., {\bf 14} (1963),
690--693.




%
%
%
%




\bibitem{moore1} R. E. Moore, \textit{ Automatic error analysis in digital computation}, LMSD Technical report
48421, Lockheed Aircraft Corporation, Sunnyvale, California, January 1959.

\bibitem{moore2} R. E. Moore, C. T. Yang, \textit{ Interval Analysis I}, LMSD Technical report 285875, Lockheed
Aircraft Corporation, Sunnyvale, California, September 1959.

\bibitem{moore:66}  R. E. Moore, Interval Analysis. Prentice-Hall, Englewood Cliffs N. J., 1966.

\bibitem{patil:95} R. B. Patil,
\textit{Interval neural networks} in \textit{Extended Abstracts of APIC'95: International Workshop on Applications of Interval Computations, El Paso, TX, 1995}, Reliable Computing, (1995). Supplement.


\bibitem{prola} J.B. Prolla,
\textit{On the Weierstrass-Stone Theorem},
J. Approx. Theory, {\bf 78} (1994), 299--313.


\bibitem{sunaga} Sunaga, T., \textit{Theory of interval algebra and its application to numerical analysis}, RAAG
Memoirs, Ggujutsu Bunken Fukuy-kai, Tokyo, 2 (1958), 29-48.


\bibitem{warmus} M. Warmus,
\textit{Calculus of approximations}, Bulletin de l'Academie Polonaise de Sciences,
\textbf{4}, 5 (1956), 253-257.

%
















%







\bibitem{CLB:2014}
Y. Chalco-Cano, W. A. Lodwick, B. Bede,
\textit{Single level constraint interval arithmetic},
Fuzzy Sets and Systems, \textbf{257}, (2014), 146--168.


\bibitem{Chalco:2021}
Y. Chalco-Cano,  T. M. Costa, H. Rom\'an-Flores, A. Rufi\'an-Lizana,
\textit{New properties of the switching points for the generalized Hukuhara differentiability and some results on calculus},
Fuzzy Sets and Systems, \textbf{404}, (2021), 62--74.


\bibitem{Chen} D. Chen,
\textit{Degree of approximation by superpositions of a sigmoidal function}, Approximation Theory and its Applications, {\bf 9} (1993), no. 3,
17--28.

\bibitem{gal:94}
S.G. Gal, \textit{Degree of approximation of fuzzy mappings by fuzzy polynomials}, J. Fuzzy Math. \textbf{2} (1994)  (4), 847--853.

\bibitem{Hong-Hahm:2002} B. I. Hong; N. Hahm,
\textit{Approximation Order to a Function in
$C(\mathbb{R})$ by Superposition of a Sigmoidal Function
}, Applied Mathematical Letters, {\bf 15} (2002), 591--597.

\bibitem{Hong-Hahm:2016} B. I. Hong; N. Hahm,
\textit{A note on neural network approximation with a sigmoidal function}, Applied Math. Sciences., {\bf 10} (2016), no. 42,
2075--2085.

\bibitem{huku:67}
M. Hukuhara,
\textit{Integration des applications mesurables dont la valeur est un compact convexe},
Funkc. Ekvacioj, \textbf{10} (1967), 205--223.

\bibitem{Je} R.I. Jewett,
\textit{A variation on the Stone-Weierstrass theorem}, Proc. Amer. Math. Soc., {\bf 14} (1963),
690--693.

\bibitem{lodwick:99}
W. A. Lodwick, \textit{Constrained interval arithmetic},
Technical Report, Center for Computational Mathematics,
University of Colorado at Denver, Denver, USA, 1999.

\bibitem{markov:77}
S. Markov, \textit{
Extended interval arithmetic},
C. R. Acad. Bulgare Sci., \textbf{30} (1977), 1239--1242.

\bibitem{Stefanini:2008}L. Stefanini; \textit{
A generalization of Hukuhara difference for interval and fuzzy arithmetic},
D. Dubois, M.A. Lubiano, H. Prade, M.A. Gil, P. Grzegorzewski, O. Hryniewicz (Eds.), Soft Methods for Handling Variability and Imprecision, Series on Advances in Soft Computing, vol. 48, Springer (2008)
An extended version is available online at the RePEc service: http://econpapers.repec.org/RAS/pst233.htm

\bibitem{Stefanini:2010}
L. Stefanini,
\textit{A generalization of Hukuhara difference and division for interval and fuzzy arithmetic},
Fuzzy Sets Syst., \textbf{161} (2010), 1564--1584.



\end{thebibliography}
\end{document}